 \definecolor{umbra}{rgb}{0.7,0.8,0.9}
 \definecolor{grn}{rgb}{.7,1,0.7}
 \definecolor{purp}{rgb}{0.8,0.5,0.8}
\def\colCell#1#2{\multicolumn{1}{>{\columncolor{#1}}c}{#2}}
\newtheorem{thm}{Theorem}[section]
\newtheorem{cor}[thm]{Corollary}
\newtheorem{lem}[thm]{Lemma}
\newtheorem{conj}[thm]{Conjecture}
\theoremstyle{definition}
\newtheorem{defn}[thm]{Definition}
\theoremstyle{remark}
\title{Completions of $\epsilon$-Dense Partial Latin Squares}
\keywords{Latin squares, partial latin squares, H\"aggkvist, Gustavsson, improper latin squares, epsilon-dense partial latin squares}
\author{Padraic Bartlett}
\begin{document}

\begin{abstract}
A classical question in combinatorics is the following:\ given a partial latin square $P$, when can we complete $P$ to a latin square $L$?   In this paper, we investigate the class of \textbf{$\epsilon$-dense partial latin squares}:\ partial latin squares in which each symbol, row, and column contains no more than $\kern-2pt\epsilon n$-many nonblank cells.  Based on a conjecture of Nash-Williams, Daykin and H\"aggkvist conjectured that all $\frac{1}{4}$-dense partial latin squares are completable.  In this paper, we will discuss the proof methods and results used in previous attempts to resolve this conjecture, introduce a novel technique derived from a paper by Jacobson and Matthews on generating random latin squares, and use this technique to study $ \epsilon$-dense partial latin squares that contain no more than $\delta n^2$ filled cells in total.  

In this paper, we construct completions for all $ \epsilon$-dense partial latin squares containing no more than $\delta n^2$ filled cells in total, given that $\epsilon < \frac{1}{12}, \delta < \frac{ \left(1-12\epsilon\right)^{2}}{10409}$.  In particular, we show that all $9.8 \cdot 10^{-5}$-dense partial latin squares are completable.

These results improve prior work by Gustavsson, which required $\epsilon =  \delta \leq 10^{-7}$, as well as Chetwynd and H\"aggkvist, which required $\epsilon = \delta = 10^{-5}$, $n$ even and greater than $10^7$.
\end{abstract}

\maketitle

\section{Introduction/History}

A latin square of order $n$ is a $n \times n$ matrix populated by the symbols $\{1,\ldots n\}$, in which each symbol occurs at most once in every row and column; analogously, a partial latin square is such a $n \times n$ matrix in which we also allow some of these cells to be blank.  We say that a $n \times n$ partial latin square is completable if there is some way to fill in all of its blank cells to get a latin square.  This is sometimes, but not always possible, as the following two examples illustrate.
\begin{align*}
\begin{array}{|c|c|c|c|} 
\hline 1 & 2 & ~ \\
\hline~ & 3 & ~ \\
\hline ~ & ~ & 2 \\\hline
\end{array} \mapsto
\begin{array}{|c|c|c|c|} 
\hline 1 & 2 & 3 \\
\hline 2 & 3 & 1 \\
\hline 3 & 1 & 2 \\\hline
\end{array}, \textrm{ while } \begin{array}{|c|c|c|c|} 
\hline 1 & ~ & ~ \\
\hline~ &  1 & ~ \\
\hline ~ & ~ & 2 \\\hline
\end{array}
\textrm{ has no completion.}
\end{align*}

Determining the classes of partial latin squares that do admit completions is a classic genre of problems in combinatorics.  To illustrate some of these questions, we list a few classes of partial latin squares for which we have resolved this question.
\begin{itemize}
\item Any latin rectangle (i.e.\ any partial latin square $P$ where the first $k$ rows of $P$ are completely filled, while the rest are blank) can be completed (Hall \cite{Hall_1945}).
 \item  If $P$ is a partial latin square all of whose nonblank entries lie within some set of $s$ rows and $t$ columns, and $s+t \leq n$, $P$ can be completed (Ryser \cite{Ryser_1951}).
 \item If $P$ is a partial latin square with no more than $n-1$ filled cells, $P$ can be completed (Smetianuk \cite{Smetianuk_1981}).
\item  If $P$ is a $n \times n$ partial latin square with order greater than $5$ such that precisely two rows and two columns of $P$ are filled, $P$ can be completed (Buchanan \cite{Buchanan_2007}).
  \end{itemize}

In this paper, we will examine the class of \textbf{$\epsilon$-dense partial latin squares}:\ $n \times n$ partial latin squares in which each symbol, row, and column contains no more than $\epsilon n$-many nonblank cells.  This class of partial latin squares was first introduced in a paper by Daykin and H\"aggkvist \cite{Daykin_Haggkvist_1984}, where they made the following conjecture.
\begin{conj}
Any $ \frac{1}{4}$-dense partial latin square can be completed.
\end{conj}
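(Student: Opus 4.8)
The plan is to recast the problem as a graph-decomposition question and then run an \textbf{absorption} argument layered on top of a semi-random (nibble) fill. A partial latin square $P$ of order $n$ is exactly a family of pairwise edge-disjoint triangles in the complete tripartite graph $K_{n,n,n}$ with vertex classes $R$ (rows), $C$ (columns) and $S$ (symbols), the entry $(r,c)=s$ being the triangle $\{r,c,s\}$; completing $P$ means extending this partial triangle decomposition to a decomposition of all of $K_{n,n,n}$. On the latin-square side the same statement reads: one must properly edge-colour the bipartite ``deficiency'' graph on $R\cup C$ in which $r$ is joined to each column whose cell in row $r$ is blank, subject to the constraint that colour class $s$ be a perfect matching between the rows missing $s$ and the columns missing $s$. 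The hypothesis $\epsilon\le\tfrac14$ enters precisely here: a row from which $s$ is missing is non-blank in at most $\epsilon n$ cells, hence blank in at least $(1-2\epsilon)n\ge\tfrac12 n$ of the (at most $n$) columns from which $s$ is also missing, and symmetrically for columns, so a defect form of Hall's theorem yields, for each $s$, a perfect matching of the required shape at the outset --- and $\tfrac14$ is exactly the point at which this crude count still works.

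The argument would then run in three phases. \textbf{Phase 1 (reserve an absorber):} before touching most of the square, select at random a sparse set of still-blank cells together with a pre-planned collection of local switches --- edge-disjoint intercalates and the $\pm1$ ``improper'' trades of Jacobson and Matthews --- organised into an \emph{absorbing structure} $\mathcal A$ with the property that for every sufficiently small partial latin square $Q$ supported on the reserved cells, some subfamily of these switches renders $\mathcal A\cup Q$ completable; the reservation is made sparse enough to perturb every row-, column- and symbol-count by only $o(n)$. \textbf{Phase 2 (nibble):} fill almost all remaining blank cells by a R\"odl nibble or random greedy process --- equivalently, start from a near-proper configuration and run the Jacobson--Matthews Markov chain until it becomes proper --- maintaining the invariant that for each symbol $s$ the ``availability'' bipartite graph between rows still missing $s$ and columns still missing $s$ stays suitably quasirandom, so that Hall's condition keeps holding for every symbol and the greedy choices never stall. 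Note that a pure nibble cannot on its own reach the regime of at most $n-1$ filled cells covered by Smetianuk's theorem \cite{Smetianuk_1981} --- it leaves $o(n^2)$, not $O(n)$, cells --- which is exactly why Phase 1 is needed. \textbf{Phase 3 (absorb):} the nibble leaves $o(n)$ blank cells in each line; feed this residual partial latin square into $\mathcal A$ and close up with the reserved switches.

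The main obstacle, and the reason the threshold sits at $\tfrac14$ rather than $\tfrac12$, is the simultaneous feasibility bookkeeping of Phase 2. A single perfect matching between ``rows missing $s$'' and ``columns missing $s$'' exists once each side has density above $\tfrac12$, but one needs all $n$ of these matchings at once --- pairwise edge-disjoint, and consistent both with an evolving proper colouring and with the row and column constraints --- and at $\epsilon=\tfrac14$ the min-degree slack is already spent, so the process cannot merely preserve a min-degree bound: it must carry a quasirandomness invariant strong enough that the effective availability density stays above $\tfrac12$ even though the crude bound does not. Controlling the structure of the leave finely enough to maintain this, and building an absorber robust enough to swallow a worst-case, possibly badly unbalanced, $o(n)$-per-line residue while proving it can be reserved without itself breaching $\epsilon$-density anywhere, is the most delicate part; and securing the clean constant $\tfrac14$ --- rather than $\tfrac14-o(1)$ or some larger value --- should in addition demand a stability analysis of the partial latin squares for which the conjecture is near-tight, which I expect to be the genuinely hard, and still-open, core of the problem.
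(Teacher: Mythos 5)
The statement you are addressing is not a theorem of the paper at all: it is the Daykin--H\"aggkvist conjecture, which the paper records as open and then proves only much weaker results toward (completability of $\epsilon$-dense squares with $\epsilon < \tfrac{1}{12}$ under an additional global bound of $\delta n^2$ filled cells, and of $9.8\cdot 10^{-5}$-dense squares unconditionally). So there is no paper proof to compare against, and your text should be judged as a purported proof of the conjecture itself. Judged that way, it is a research programme, not a proof, and you say so yourself: the closing sentence concedes that maintaining the quasirandomness invariant at the exact threshold, building the absorber, and the stability analysis are ``the genuinely hard, and still-open, core of the problem.'' None of the three phases is actually carried out. Phase 1 asserts the existence of an absorbing structure $\mathcal{A}$ that can swallow an arbitrary (possibly adversarially unbalanced) $o(n)$-per-line residue, but no construction, no definition of ``sufficiently small,'' and no proof that such an $\mathcal{A}$ can be reserved inside an arbitrary $\tfrac14$-dense square is given; this is precisely the step where absorption arguments for exact thresholds live or die. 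Phase 2 asserts that a nibble or the Jacobson--Matthews chain preserves a quasirandomness invariant strong enough to keep every symbol's availability graph above the Hall threshold, but at $\epsilon=\tfrac14$ the initial slack is zero, so the invariant you need is strictly stronger than anything the opening Hall/defect computation provides, and you give no candidate invariant nor any argument that the process preserves one. The Hall computation itself only produces one matching for one symbol at the outset; it says nothing about $n$ simultaneous, edge-disjoint, mutually consistent matchings, which you correctly identify as the real difficulty and then leave untouched.

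A smaller but telling inaccuracy: the claim that $\tfrac14$ ``is exactly the point at which this crude count still works'' does not establish the relevance of the threshold. The significance of $\tfrac14$ comes from the explicit non-completable $(\tfrac14+c)$-dense examples of Wanless cited in the paper and from the Nash-Williams degree bound for triangle decompositions, not from the defect-Hall count, which in any case cannot be iterated once cells are being filled. In short: the reduction to triangle decompositions of $K_{n,n,n}$ and the use of Jacobson--Matthews improper trades are sensible ingredients (the latter is exactly the paper's key tool, used there in a local, deterministic way rather than inside a nibble-plus-absorption scheme), but as written your argument has no proof content beyond the first paragraph's single-matching observation, and the conjecture remains open.
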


 Gustavsson \cite{Gustavsson_1991} noted in his thesis (completed under H\"aggkvist) that this bound of $\frac{1}{4}$ was anticipated somewhat by a conjecture of Nash-Williams \cite{Nash_Williams_1970} on triangle decompositions of graphs.
\begin{conj}
Suppose that $G = (V,E)$ is a finite simple graph where each vertex has even degree, $|E|$ is a multiple of $3$, and every vertex of $G$ has degree no less than $\frac{3}{4} n$.  Then we can divide the edge set of $G$ into $|E|/3$ distinct edge-disjoint triangles.
\end{conj}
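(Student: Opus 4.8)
The plan is to attack this by the \emph{absorption method}, the now-standard route for upgrading an approximate decomposition to an exact one. The two hypotheses beyond the degree bound --- every vertex has even degree and $3 \mid |E|$ --- are exactly the obvious necessary conditions (pairing the edges at a vertex into triangles needs even degree; counting edges needs divisibility by $3$), so the real content is to show that $\delta(G) \ge \frac{3}{4}n$ is enough to make both an approximate triangle decomposition and an ``absorber'' for the leftover exist at once.

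The first step is a \textbf{fractional relaxation}: produce nonnegative weights $w_T$ on the triangles $T$ of $G$ with $\sum_{T \ni e} w_T = 1$ for every edge $e$. This is a linear feasibility problem, and by Farkas' lemma it can fail only if there is a signed edge-weighting that is nonpositive on every triangle yet has positive total; one rules this out by a local redistribution argument using that $G$ is ``triangle-rich'' --- for every edge $uv$, $|N(u) \cap N(v)| \ge 2\delta(G) - n \ge \frac{1}{2}n$, so each edge lies in at least $\tfrac{1}{2}n$ triangles and the triangle hypergraph is homogeneous enough to carry a near-uniform fractional tiling. This is also morally where $\tfrac{3}{4}$ enters: below it the common-neighbourhood bound degenerates and blow-up constructions destroy the fractional decomposition.

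The second step is to \textbf{build and use an absorber}. Reserve a sparse random subgraph $R \subseteq G$ and show that with high probability $R$ is robustly decomposable: for every small ``leftover'' multigraph $L$ meeting the parity and divisibility constraints, $R \cup L$ has a triangle decomposition. One does this by fixing in advance a large family of (essentially vertex-disjoint) gadgets, each admitting two triangle decompositions that differ on a single prescribed edge, and chaining them so that any admissible $L$ can be swallowed; the minimum-degree hypothesis is what guarantees enough disjoint room to plant all the gadgets. On $G \setminus R$ one then runs the R\"odl nibble (or a related semi-random greedy process) to delete almost every edge inside a triangle, leaving a leftover $L$ small enough for $R$ to absorb; splicing the nibble decomposition, the absorbed decomposition, and the reservation bookkeeping yields a triangle decomposition of all of $E(G)$.

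The main obstacle is that the constant $\tfrac{3}{4}$ is \emph{tight}: there are graphs with $\delta(G)$ just below $\tfrac{3}{4}n$ with no triangle decomposition, so none of the steps can be run with the usual $\varepsilon$-slack in the degree. In particular the fractional decomposition must be produced at \emph{exactly} the threshold $\tfrac{3}{4}n$ --- itself a well-known open problem --- and the nibble-plus-absorber argument must then be carried out without surrendering a constant factor; reconciling these is why the conjecture stays open despite a long line of results pushing the sufficient minimum degree steadily toward $\tfrac{3}{4}n$. The instructive special case $G = K_{n,n,n}$ is precisely the problem of completing the empty partial latin square, which hints that latin-square moves of Jacobson--Matthews type (used later in this paper) might help near the threshold --- though it is the absence of that tripartite rigidity in a general $G$ that makes the full conjecture hard.
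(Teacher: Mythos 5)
This statement is Nash--Williams's conjecture, and the paper presents it only as a \emph{conjecture}: it offers no proof, and none is known. So there is nothing in the paper to compare your argument against, and the only honest assessment is whether your proposal actually closes the problem. It does not, and you say so yourself in your final paragraph. What you have written is an accurate survey of the modern strategy (fractional relaxation, R\"odl nibble on the bulk, absorption of the leftover --- the Barber--K\"uhn--Lo--Osthus / Delcourt--Postle line of attack), but two of its load-bearing steps are asserted rather than proved. First, the fractional triangle decomposition at minimum degree exactly $\frac{3}{4}n$: your Farkas/``local redistribution'' sketch does not rule out the dual obstruction, and the best known thresholds for fractional decompositions sit strictly above $\frac{3}{4}$. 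Second, the claim that the nibble-plus-absorber machinery can be run ``without surrendering a constant factor'' in the degree is precisely the open difficulty; every published implementation of this scheme loses slack and therefore proves the statement only for minimum degree $(\frac{3}{4}+\varepsilon)n$ or worse. A proof cannot contain, as one of its steps, ``resolve a well-known open problem.''

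Separately, note a small issue with the statement itself as transcribed: $n$ is used for $|V|$ without being introduced, and the degree hypothesis $\delta(G)\ge\frac{3}{4}n$ subsumes nothing about the other two conditions --- they are, as you correctly observe, the trivially necessary parity and divisibility conditions. Your observation that $G=K_{n,n,n}$ (or rather, the tripartite analogue) links the conjecture to latin square completion is exactly the connection the paper draws, and it is a good instinct; but for the purposes of this paper the conjecture is motivation for the Daykin--H\"aggkvist bound of $\frac{1}{4}$, not something the paper claims or needs to establish. If you want to contribute a proof here, the concrete open subproblem to isolate is the fractional decomposition threshold; everything else in your outline is (hard but) known technology.
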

These two conjectures are linked via the following natural bijection between partial latin squares and triangulations of tripartite graphs, illustrated below.
\begin{center}
\includegraphics[width=3.5in]{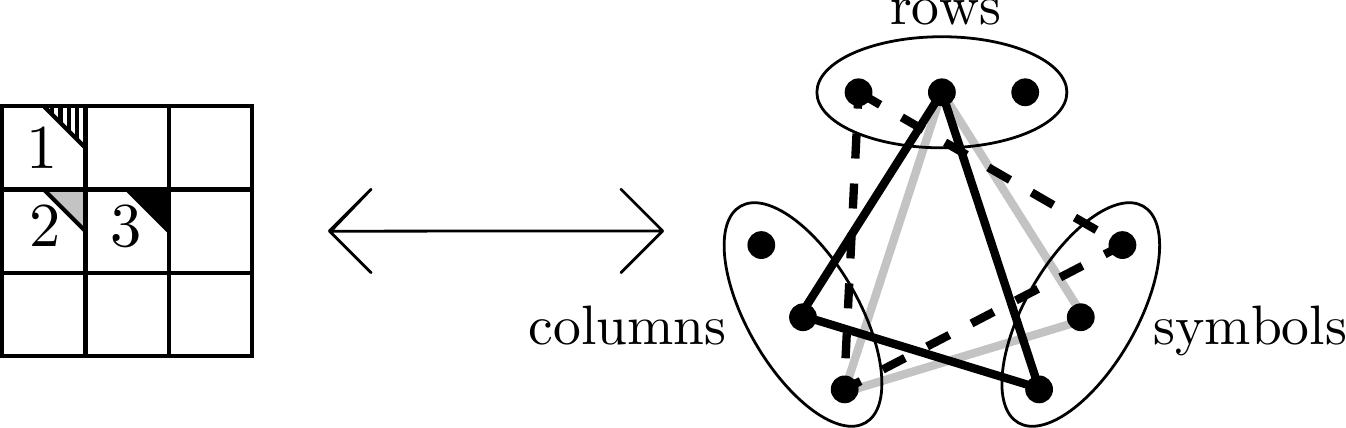}
\end{center}
In particular, given any $ \epsilon$-dense partial latin square $P$, the above transformation allows us to transform the partial latin square into a triangulated tripartite graph in which no vertex has more than $\epsilon n$-many neighbors in any one given part.  Therefore, triangulating the \textit{tripartite complement} of this graph corresponds to completing $P$ to a complete latin square; in this sense, the conjectured Nash-Williams degree bound of $\frac{3}{4}n$ suggests the Daykin-H\"aggkvist conjecture that all $ \frac{1}{4}$-dense partial latin squares are completable.

It bears noting that this bound of $\frac{1}{4}$ is tight; for any $c > 0$, there are known $ \left(\frac{1}{4} + c\right)$-dense partial latin squares (see Wanless \cite{Wanless_2002}) that cannot be completed.

In the 1984 paper where Daykin and H\"aggkvist formed this conjecture, they also proved the following weaker form of their claim.
\begin{thm}
All $\frac{1}{2^9\sqrt{n}}$-dense partial latin squares are completable, whenever $n \equiv 0 \mod 16$.
\end{thm}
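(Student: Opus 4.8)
The plan is to recast the completion of $P$ in edge-colouring language and then build a latin square around it in $\sqrt n$ rounds. A partial latin square of order $n$ is precisely a proper partial edge-colouring of $K_{n,n}$ --- one side indexing rows, the other columns --- using the $n$ colours $\{1,\dots,n\}$, and completing $P$ amounts to extending this to a proper $n$-edge-colouring; equivalently, through the correspondence with triangulated tripartite graphs recalled above, it amounts to triangulating the tripartite complement of the graph attached to $P$. I would fix a block size $b := \sqrt n$ (the divisibility hypothesis $16 \mid n$ is used only to keep the bookkeeping below integral; a general $n$ reduces to this case by padding), partition the $n$ symbols into $\sqrt n$ blocks of $b$ symbols, and install the symbols one block at a time.

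The invariant to carry is that after $t$ rounds every row and every column has received exactly $tb$ entries --- the symbols of the first $t$ blocks, each placed once in each line, together with the entries of $P$ that use those symbols --- while the residual demand (which line still needs which of the remaining $n-tb$ symbols, and which entries of $P$ are yet to be honoured) is again a sparse instance on $n-tb$ symbols, inheriting the per-line bound $\epsilon n$ from $P$. One round then consists of: forming the bipartite graph $H$ of cells not yet used and not forbidden to any symbol of the current block $B$; noting that every vertex of $H$ has degree at least $n-tb \geq b$ and extracting a spanning subgraph in which every vertex has degree (essentially) $b$; and colouring that subgraph with the $b$ colours of $B$ so as to agree with the entries of $P$ lying in $B$. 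The point that makes this feasible is that $P$ obstructs each vertex only faintly: a single row or column holds at most $\epsilon n = \sqrt n / 2^9 = 2^{-9} b$ entries in all, so at most a $2^{-9}$ fraction of the $b$ colours of the round is ever pre-used or forbidden at any vertex --- and a fixed positive fraction of slack is exactly what the threshold $\epsilon \le \tfrac{1}{2^9\sqrt n}$ is there to provide. Iterating the round $\sqrt n$ times installs all $n$ symbols and yields the completion.

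I expect the main obstacle to be this single-round colouring step. One cannot simply invoke the K\"onig edge-colouring theorem in the presence of pre-coloured and forbidden edges, and precolouring-extension for bipartite graphs is false without hypotheses, so the argument has to genuinely exploit the $2^{-9}$-fraction slack --- most plausibly via a list-edge-colouring estimate, or via a careful sequence of augmenting-path/flow adjustments --- and it must be set up so that the residue left after the round still meets the invariant. A secondary nuisance is the endgame: in the last round only $b$ symbols remain and $H$ has thinned out, so rather than push the generic step to the bitter end I would halt a few rounds early, observe that what is left to fill is then a genuinely sparse partial latin square on few symbols, and finish it by hand --- for instance by Smetianuk's theorem once fewer than $n-1$ cells remain, or by Ryser's theorem after first corralling the residue into a bounded number of rows and columns.
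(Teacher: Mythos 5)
You should first be aware that the paper does not actually prove this statement: it is quoted as a historical result of Daykin and H\"aggkvist, stated without proof among the background theorems, so your proposal has to stand entirely on its own. It does not, and the decisive gap is the one you yourself flag. Your outline reduces the theorem to the claim that, in each round, a (nearly) $b$-regular bipartite graph carrying a precolouring which uses at most a $2^{-9}$ fraction of the $b$ colours at each vertex admits a proper $b$-edge-colouring extending that precolouring. That claim \emph{is} the theorem --- it is exactly where the constant $2^9$ has to be earned --- and it is genuinely delicate: precolouring extension for edge-colourings of regular bipartite graphs fails in general (already in $K_{2,2}$ with a precoloured perfect matching whose two edges receive distinct colours, no extension exists), so some quantitative argument exploiting the slack is unavoidable, and ``most plausibly via a list-edge-colouring estimate, or via a careful sequence of augmenting-path/flow adjustments'' is a research programme, not a proof. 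In addition, the round must be set up so that the extracted $b$-regular spanning subgraph contains every $P$-edge whose symbol lies in the current block while avoiding every cell reserved for a $P$-edge of a later block, and so that the leftover graph is again regular enough for the invariant; none of this is argued.

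There is also a concrete error in the framing: $n \equiv 0 \pmod{16}$ does not make $\sqrt{n}$ an integer (take $n = 32$), so the partition into $\sqrt{n}$ blocks of $\sqrt{n}$ symbols does not exist as written, and the divisibility hypothesis cannot be waved off as ``bookkeeping'' --- in Daykin and H\"aggkvist's actual argument it reflects a structural subdivision of the square into a fixed number of subarrays, not integrality of $\sqrt{n}$. Replacing $b$ by $\lfloor\sqrt{n}\rfloor$ is repairable, but the misreading of the hypothesis, together with the unproven round lemma, means the proposal is a plan for a proof rather than a proof.
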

This was strengthened in 1990, by Chetwynd and H\"aggkvist \cite{Chetwynd_Haggkvist_1985}, to the following theorem.
\begin{thm}
 All $ 10^{-5}$-dense partial latin squares are completable, for $n$ even and no less than $10^7$.
\end{thm}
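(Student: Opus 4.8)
The plan is to recast this as a precoloring-extension problem for $1$-factorizations of $K_{n,n}$ and then attack it by amalgamation and detachment. A partial latin square $P$ of order $n$ is precisely a proper partial edge-coloring of the complete bipartite graph $K_{n,n}$ with color set $\{1,\dots,n\}$: take the rows and columns of $P$ as the two vertex classes and color the edge $r_i c_j$ with $k$ exactly when cell $(i,j)$ of $P$ holds the symbol $k$. Under this dictionary, completing $P$ to a latin square is the same as extending the partial coloring to a proper $n$-edge-coloring of $K_{n,n}$, i.e.\ to a $1$-factorization (equivalently, in the tripartite picture above, triangulating the tripartite complement). The hypothesis that $P$ is $\epsilon$-dense says exactly that each vertex of $K_{n,n}$ is incident to at most $\epsilon n$ precolored edges and that each color class of the precoloring has at most $\epsilon n$ edges; so the goal becomes a precoloring-extension theorem for $1$-factorizations of $K_{n,n}$ whose input is sparse in this sense, with $\epsilon = 10^{-5}$ and $n$ even, $n \ge 10^{7}$.

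Rather than deducing this from a quantitative form of the Nash--Williams triangle conjecture via the tripartite dictionary (the route later taken by Gustavsson's thesis), I would argue directly by passing to a coarse scale. Fix a constant $c > 1$ and an integer $m \mid n$ with $m \le \tfrac{1}{c\epsilon}$, so that the block size $k = n/m$ satisfies $k \ge c\,\epsilon n$, and split the $n$ columns into $m$ blocks $C_1, \dots, C_m$ of size $k$. Amalgamating the columns of each block turns $P$ into a partial \emph{outline latin square} $P'$: an $n \times m$ array whose $(i,J)$ entry records the multiset of $P$-symbols appearing in row $i$ across the columns of block $C_J$, a full outline latin square being one in which every fat cell is a $k$-subset of $\{1,\dots,n\}$, the $m$ fat cells of each row partition $\{1,\dots,n\}$, and each symbol occurs exactly $k$ times down each fat column. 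Because every symbol occurs at most $\epsilon n$ times in all of $P$ while its target frequency $k$ down a fat column is larger by a factor $c$, the array $P'$ carries a great deal of slack at this coarse scale, and I would complete it to a full outline latin square $\widehat P$ by an iterated Hall-type argument, the relevant availability bipartite graph at each step being nearly complete. One then invokes a detachment (disamalgamation) theorem of the type developed by Hilton, Nash-Williams, and others --- every outline latin square is the amalgamation of a genuine latin square --- to split $\widehat P$ into a latin square $L$ of order $n$.

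The hard part will be that a careless detachment of $\widehat P$ need not reproduce $P$: the filled cells of $P$ must come back in their \emph{original} columns bearing their \emph{original} symbols, and arranging merely that fat cell $(i,J)$ of $\widehat P$ contains the correct multiset of $P$-symbols is necessary but far from sufficient. So what is really needed is a detachment \emph{relative to a prescribed subconfiguration}: the completion $\widehat P$ and its detachment have to be produced together, consistently with $P$. Concretely, the disamalgamation proceeds by repeatedly properly edge-coloring auxiliary bipartite graphs --- each step splitting one fat column into two smaller ones --- and to respect $P$ one must run each of these colorings with the at most $\epsilon n$ $P$-edges at each vertex precolored; each such coloring is itself a small precoloring-extension problem in a nearly complete bipartite graph, solvable precisely because $\epsilon$ is tiny and $n$ is large enough to absorb the errors that accumulate over the $O(n)$ detachment steps. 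The divisibility and parity conditions demanded by those successive splittings, and by the $1$-factorizations of the auxiliary graphs, are what force $m \mid n$ and, at an intermediate step, $n$ even; and bookkeeping the chain of inequalities that must hold among $\epsilon$, $m$, and $n$ for every step to go through is what pins the statement down to $\epsilon = 10^{-5}$, $n$ even, $n \ge 10^{7}$.
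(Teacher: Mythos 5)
Your reduction to a precoloring-extension problem for $1$-factorizations of $K_{n,n}$ is a correct and standard reformulation, and the amalgamation/detachment route is genuinely different from the one this theorem actually rests on (Chetwynd and H\"aggkvist work entirely with trades: they build a latin square $L$ in which every cell lies in $n/2$ distinct $2\times 2$ subsquares --- this is where the evenness of $n$ enters concretely --- and then find, for each filled cell of $P$, a small trade forcing agreement, with all trades pairwise disjoint). But as written your argument has a gap at exactly the point you flag as ``the hard part,'' and it is not a gap that the surrounding slack repairs. The detachment of $\widehat P$ relative to the prescribed subconfiguration $P$ is not a sequence of independently easy subproblems: each step of the detachment extracts one genuine column from a fat column by finding a system of distinct representatives (a perfect matching in an auxiliary row--symbol bipartite multigraph) that \emph{contains} the precolored $P$-edges of that column, and as the fat columns shrink from width $k$ down to width $1$ the slack you gained by coarsening disappears entirely --- the final extractions are as constrained as the original completion problem. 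Saying these steps are ``solvable precisely because $\epsilon$ is tiny and $n$ is large'' is asserting the theorem, not proving it; without a quantitative invariant that is maintained across all $O(n)$ splittings (control of multiplicities in the fat cells, a Hall-type defect bound that survives deletion of the precolored vertices at every stage, and an accounting of how the $P$-constraints propagate into the residual outline square), the induction has no content. This is also why the unconstrained detachment theorems of Hilton and Nash-Williams do not apply off the shelf: they reconstruct \emph{some} latin square from an outline latin square, not one agreeing with a prescribed set of cells.

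Separately, none of the specific numerology of the statement is derived. You do not exhibit the chain of inequalities that yields $\epsilon = 10^{-5}$ and $n \ge 10^{7}$, and the role of the parity hypothesis in your scheme is left as a vague appeal to ``divisibility and parity conditions''; in the actual proof the evenness of $n$ has a precise structural source (the existence of the $\begin{array}{|c|c|}\hline A & B \\ \hline B^{T} & A^{T} \\ \hline \end{array}$ square with its $2\times 2$ subsquare property), and any replacement proof needs an equally concrete reason for it. As it stands the proposal is a plausible program, not a proof.
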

Using Chetwynd and H\"aggkvist's result along with the above tripartite graph-partial latin square connection, Gustavsson extended this result further in his aforementioned thesis to all values of $n$, in exchange for a slightly worse bound on $\epsilon$.
\begin{thm}
All $ 10^{-7}$-dense partial latin squares are completable.
\end{thm}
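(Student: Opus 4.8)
We would split on the order $n$. If $n < 10^{7}$ then $\epsilon n = 10^{-7}n < 1$, so the density hypothesis forces every row, column and symbol of $P$ to contain $0$ nonblank cells; thus $P$ is empty and is completed by, say, the Cayley table of $\mathbb{Z}/n\mathbb{Z}$. If $n \geq 10^{7}$ is even, then since a $10^{-7}$-dense partial latin square is a fortiori $10^{-5}$-dense, the Chetwynd--H\"aggkvist theorem above applies directly. So the entire content lies in the case $n \geq 10^{7}$ odd, which we would attack through the partial latin square / tripartite graph correspondence, reducing it to the even case.

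Given such a $P$, let $G = K_{n,n,n}\setminus P$ be the balanced tripartite graph on parts $R,C,S$ (rows, columns, symbols, each of size $n$) obtained by deleting the three edges of each filled triangle of $P$; then every vertex of $G$ has degree at least $2(1-\epsilon)n$, the deletions preserve the parity and divisibility conditions coming from $K_{n,n,n}$, and completing $P$ is exactly decomposing $E(G)$ into triangles. To obtain even part-sizes, pass to the $2$-fold blow-up $H$: replace each vertex by two copies and join $(u,i)\sim(v,j)$ whenever $uv\in E(G)$. Then $H = K_{2n,2n,2n}\setminus Q$, where $Q$ is the partial latin square of order $2n$ obtained by replacing each filled triangle $rcs$ of $P$ with the four triangles of an octahedron decomposition of the $12$ edges on $\{r,c,s\}\times\{0,1\}$; one checks that $Q$ is a legitimate partial latin square, that its order $2n$ is even and at least $10^{7}$, and that it is again $\epsilon$-dense. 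Hence, by the even case, $Q$ is completable, i.e.\ $H$ admits a triangle decomposition.

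The step I expect to be the real obstacle is the descent from a triangle decomposition of $H$ to one of $G$. There is no reason a decomposition of $H$ produced by the even case is invariant under the $\mathbb{Z}/2\mathbb{Z}$-action interchanging the two copies of each vertex, so it need not quotient down; equivalently, a completion of the order-$2n$ square $Q$ need not contain a completion of $P$ as a latin subsquare, since a symbol can appear in ``the wrong copy'' of $S$ inside the block of old rows and columns, and one cannot pin the block down to symbols of $S$ by prescribing merely $O(\epsilon n)$ cells per line --- that is a global, not a local, constraint. Making the reduction go through therefore needs more than a black-box call to Chetwynd--H\"aggkvist: one either produces a completion of $Q$ that respects the blow-up structure (a ``structured'' version of the even theorem), or runs an absorption / iterative-completion scheme in which even-order completions are invoked only on a small controlled leftover. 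It is exactly this reconciliation of the parity reduction with the density bound that costs the factor of roughly $100$, pushing $10^{-5}$ down to $10^{-7}$.
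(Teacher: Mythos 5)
This statement is not proved in the paper at all: it is Gustavsson's theorem, quoted from his thesis \cite{Gustavsson_1991} as background, so there is no in-paper argument to compare yours against. Judged on its own terms, your proposal has a genuine gap, and you have located it yourself. The cases $n < 10^{7}$ (where $\epsilon n < 1$ forces $P$ to be empty) and $n \ge 10^{7}$ even (direct from Chetwynd--H\"aggkvist, since $10^{-7}$-dense implies $10^{-5}$-dense) are fine but carry essentially none of the content. The entire theorem lives in the odd case, and there your argument stops at exactly the fatal point: a triangle decomposition of the $2$-fold blow-up $H$ need not be invariant under the involution swapping the two copies of each vertex, so it does not descend to a decomposition of $G$, and no amount of massaging $Q$ as a partial latin square fixes this, because ``the completion restricted to the old block uses only old symbols'' is a global constraint that cannot be encoded by prescribing $O(\epsilon n)$ cells per line. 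Acknowledging an obstruction is not the same as overcoming it, so as written this is not a proof.

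For what it is worth, Gustavsson's actual route is not a parity-doubling reduction to the even case. He works directly in the triangle-decomposition setting suggested by the tripartite correspondence sketched in the introduction: he proves that any tripartite graph sufficiently close to $K_{n,n,n}$ (minimum degree at least $(1-\epsilon)n$ into each part, plus the necessary divisibility conditions) admits a triangle decomposition, for $\epsilon \le 10^{-7}$, reusing and adapting the Chetwynd--H\"aggkvist machinery of small trades inside that graph-theoretic framework rather than invoking their latin-square theorem as a black box. That is why the parity of $n$ disappears and why the constant degrades from $10^{-5}$ to $10^{-7}$. If you want to salvage your outline, you would need either a ``structured'' even-order completion theorem that respects the blow-up, or an absorption scheme --- neither of which is supplied by anything stated in this paper.
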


The proofs in Chetwynd and H\"aggkvist's paper are difficult in parts, but the key idea behind the entire paper, \textbf{trades on latin squares}, is rather simple and elegant.  We define these trades below.
\begin{defn} 
A \textbf{trade} is a pair of partial latin squares $(P, Q)$ that satisfy the following two properties:
  \begin{itemize}
\item A cell $(i,j)$ is filled in $P$ if and only if it is filled in $Q$.
\item Any row or column in $P$ contains the same symbols as that same row or column in $Q$.
  \end{itemize}
For example, the following pair of partial latin squares form a trade.
  \begin{align*}(P,Q) = \left(~
\begin{array}{|c|c|c|c|}
\hline \colCell{umbra}{1}&  &\colCell{umbra}{3} & \\
\hline &  &  & \\
\hline \colCell{umbra}{3}&  & \colCell{umbra}{1} & \\
\hline  &  &  & \\
\hline
\end{array} , ~
\begin{array}{|c|c|c|c|}
\hline\colCell{umbra}{3}&  &\colCell{umbra}{1} & \\
\hline  &  &  & \\
\hline \colCell{umbra}{1} & &\colCell{umbra}{3} & \\
\hline &  &  & \\
\hline
\end{array}~\right)
\end{align*}
\end{defn}

In particular, Chetwynd and H\"aggkvist repeatedly use trades as a way to perform \textbf{small, local} modifications on rather large latin squares.  This is done as follows:\ suppose that $(P,Q)$ is a trade and $L$ is a completion of $P$.  Look at the array $M$ formed by taking $L$ and replacing all of $P$'s cells in $L$ with those of $Q$; by definition, this new array is still latin. 
\begin{align*} L =
\begin{array}{|c|c|c|c|}
\hline\colCell{umbra}{1} & 2 &  \colCell{umbra}{3} & 4\\
\hline 4 & 1 & 2 & 3\\
\hline \colCell{umbra}{3} & 4 & \colCell{umbra}{1} & 2\\
\hline 2 & 3 & 4 & 1\\
\hline
\end{array} \longmapsto M = 
\begin{array}{|c|c|c|c|}
\hline \colCell{umbra}{3}& 2 & \colCell{umbra}{1} & 4\\
\hline 4 & 1 & 2 & 3\\
\hline \colCell{umbra}{1}& 4 & \colCell{umbra}{3} & 2\\
\hline 2 & 3 & 4 & 1\\
\hline
\end{array}
\end{align*}
Trades of the above form, that consist of a pair of $2 \times 2$ subsquares, are particularly useful because they are the simplest trades that exist.  We call these trades $\mathbf{2} \times \mathbf{2}$ \textbf{trades} throughout this paper, and make frequent use of them.

Using trades, a rough outline of Chetwynd and H\"aggkvist's paper can be thought of as the following:
\begin{enumerate}
\item Construct  a $n \times n$ latin square $L$ in which every cell is involved in lots of well-understood $2 \times 2$ trades.
\item For every filled cell $(i,j)$ in $P$, use this structure on $L$ to find a ``simple'' trade on $L$ such that performing this trade causes $L$ and $P$ to agree at $(i,j)$.  (``Simple'' here means that it should not be too difficult to find using our given structure, nor should this trade disturb the contents of too many cells in $L$.)
\item If such trades can be found for every cell $(i,j)$, such that none of these trades overlap (i.e.\ no cell is involved in more than one trade), then it is possible to apply all of these trades simultaneously to $L$. Doing this results in a latin square that agrees with $P$ at every filled cell of $P$.
\end{enumerate}

As mentioned before, these methods work for $ 10^{-5}$-dense partial latin squares, when $n$ is even and no less than $10^7$; however, they do not seem to work on denser partial latin squares.  The main reason for this is that using small trades creates very strong \textbf{local} and \textbf{global} constraints on our partial latin square $P$ and our constructed latin square $L$, in the following ways:
\begin{itemize}
\item  In general, a $n \times n$ latin square $L$ where every cell is involved in many small well-understood trades does not always seem to \textit{exist}.  In particular, the Chetwynd and H\"aggkvist paper relies on the existence of latin squares $L$ where every cell is involved in $n/2$ distinct $2 \times 2$ trades:\ however, these squares appear to only exist in the case that $n$ is even.  (More generally, for a fixed constant $c$, latin squares $L$ where every cell is involved in $n/c$ many $2 \times 2$ trades appear to be difficult to find or construct or find whenever $n$ is odd.)
\item Moreover, if we want to follow their blueprint, we will need to find a large collection of disjoint trades on $L$; namely, one for every filled cell in $P$.  In doing this, we need to ensure that no given row (or column, or symbol) gets used ``too often'' in our trades; otherwise, it is possible that we will need to use that row at a later date to fix some other cell in $P$, and we will be unable to find a nonoverlapping trade.  This is a strong \textbf{local} constraint, as it requires us to reserve in every row/column/symbol a large swath of ``available'' cells which we have not disturbed, so that we can use their structure to construct future trades.  This also forces us to do a lot of normalization work before and during the search for these trades, in order to preserve this structure.  (This is the ``difficult'' part of Chetwynd and H\"aggkvist's proof, which otherwise is as straightforward and elegant as our earlier outline suggests.)
\item Finally, we also have a large amount of \textbf{global} constraints that we are running into.  In order to find any of these trades, we need to preserve a large amount of structure in $L$.  However, using this structure means that we need to ensure that most of $L$ still looks like the well-structured square we started with:\ consequently, each trade requires much more structure than just the cells it locally disturbs.
\end{itemize}

Given the above issues, it may seem like the technique of using trades to complete partial latin squares is a dead-end.  However, we can overcome many of these restrictions by using the concept of  \textbf{improper} latin squares and trades (introduced by Jacobson and Matthews \cite{Jacobson_Matthews_1996} in a 1996 paper on generating random latin squares.)  We define these objects below.
 \begin{defn}  A \textbf{improper latin square} $L$ is a $n \times n$ array, each cell of which contains a nonempty signed subset of the symbols $\{1, \ldots n\}$, such that the signed sum of any symbol across any row or column is 1.

A quick example:
  \begin{align*}
\begin{array}{|c|c|c|c|}
\hline 1 & 2 & 3 & 4\\
\hline 4 & 1 & 1 & 3 + 2 - 1\\
\hline 3 & 4 & 2 & 1\\
\hline 2 & 3 & 4 & 1\\
\hline
\end{array}
\end{align*}

Analogously, we can define an \textbf{partial improper latin square} as a $n \times n$ array, each cell of which contains a nonempty signed subset of the symbols $\{1, \ldots n\}$, such that the signed sum of any symbol across any row or column is either $0$ or $1$, and an \textbf{improper trade} as simply a pair of partial improper latin squares that share the same set of filled cells and the same signed symbol sums across any row or column.
 \end{defn}
Essentially, improper latin squares exist so that the following kinds of things can be considered trades.
   \begin{align*} P =
\begin{array}{|c|c|c|c|}
\hline \colCell{umbra}{a} & & \colCell{umbra}{b} & \\
\hline  &  &  & \\
\hline \colCell{umbra}{b} &  & \colCell{umbra}{c} & \\
\hline & & & \\
\hline
\end{array} \longmapsto Q = 
\begin{array}{|c|c|c|c|}
\hline \colCell{umbra}{b} & & \colCell{umbra}{a} & \\
\hline  & &  & \\
\hline \colCell{umbra}{a} & & \colCell{umbra}{b + c - a} & \\
\hline  &  & & \\
\hline
\end{array}
\end{align*}
Call these trades \textbf{improper $2 \times 2$ trades:} in practice, these will be the only improper trades that we need to use.

The main use of these improper $2 \times 2$ trades is that they let us ignore the ``local'' constraints described earlier:\ because we do not need a cell to be involved in a proper $2 \times 2$ trade in order to manipulate it, $L$ does not require any local $2 \times 2$ structure.  In particular, this lets us use latin squares $L$ of odd order, as it is not difficult to construct a latin square $L$ of odd order with a large global number of $2 \times 2$ subsquares (even though some cells will not involved in any $2 \times 2$ trades.)  We will still have the global constraints noticed earlier; in general, any system that uses only a few pre-defined types of trades seems like it will need to have some global structure to guarantee that those trades will exist.  However, just removing these local constraints gives us several advantages.
\begin{itemize}
\item Using improper trades, we can complete all partial latin squares that are $\kern-2pt 9.8 \cdot 10^{-5}$-dense, an improvement from the $ 10^{-7}$ partial latin squares of Gustavsson and the $ 10^{-5}, n\geq 10^7,$ even partial latin squares of Chetwynd and H\"aggkvist.  If we allow ourselves to examine claims that hold for larger values of $n$, we can marginally improve this to the claim that all $10^{-4}$-dense partial latin squares are completable, for $n > 1.2 \cdot 10^5.$
\item More interestingly, because we have removed these local constraints, we can now talk about completing  $ \epsilon$-dense partial latin squares that globally contain no more than $\delta n^2$-many filled cells, where $\epsilon$ and $\delta$ may not be equal.  In other words, we can now differentiate between our global and local constraints; this allows us to (in particular) massively improve our local bound $\epsilon$ at the expense of our global bound $\delta$.  For example, we can use improper trades to complete any $\mathbf{\frac{1}{13}}$\textbf{-dense} partial latin square, provided that it globally contains no more than $5.7 \cdot 10^{-7} \cdot n^2$ filled cells.  
\item In fact, given any $\epsilon \in \left[ 0, \frac{1}{12} \right)$, and any value of $\delta < \frac{ \left(1-12\epsilon\right)^{2}}{10409}$, we can show that any $ \epsilon$-dense partial latin square $P$ containing no more than $\delta n^2$ filled cells in total is completable.
\item Furthermore, because we have removed these local constraints, we can eliminate a lot of the ``normalization'' processes and techniques that Chetwynd and H\"aggkvist needed for their trades:\ consequently, these proof methods are (in some senses) easier to understand.
\end{itemize}

 The following process outlines how we will construct a completion of any such $ \epsilon$-dense partial latin square $P$ containing no more than $\delta n^2$ filled cells:
\begin{enumerate}
\item First, we will create a latin square $L$ that globally contains a large number of $2 \times 2$ subsquares.
\item Then, we will show that in any fixed row or column, it is possible to exchange the contents of ``almost any'' two cells using simple trades, provided that we have not disturbed too much of $L$'s global structure.
\item Using the above claim, we will show that given any filled cell $(i,j)$ in $P$, there is a trade that causes $L$ and $P$ to agree at this filled cell, and that does not disturb any cells at which $P$ and $L$ previously agreed.
\item By repeated applications of (3), we will turn $L$ into a completion of $P$, which is what we want.
\end{enumerate}

With our goals clearly stated and our techniques described, all that remains for us is to explicitly prove the above claims.
\section{The Proof}

We begin by creating latin squares with ``lots'' of well-understood $2 \times 2$ subsquares.
\begin{lem}\label{lem1}
For any $k$ , there is a $2k \times 2k$ latin square $L$ of the form $\begin{array}{|c|c|} \hline A & B \\ \hline B^T & A^T \\ \hline\end{array}$, with the following property: if there are two cells $(i,j), (i',j')$ in opposite quadrants containing the same symbol, then there is a $2 \times 2$ trade that exchanges the contents of these two symbols.

Furthermore, there is a way to extend this construction to an $n \times n$ odd-order square, in such a way that preserves this property at all but $3n + 7$ cells in the new odd-order square.
\end{lem}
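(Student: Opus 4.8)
The plan is to build the even-order square $L$ explicitly from a single cyclic latin square, and then show that the particular block structure $\left[\begin{smallmatrix} A & B \\ B^T & A^T\end{smallmatrix}\right]$ forces the $2\times 2$ trades we want. Concretely, I would take $A$ to be (a suitable relabeling of) the addition table of $\mathbb{Z}/2k$ — or better, work with two interleaved copies of $\mathbb{Z}/k$ — chosen so that $B$ records the ``other'' coset. The point of the transpose-symmetric layout is this: if $(i,j)$ lies in quadrant $A$ and $(i',j')$ lies in the opposite quadrant $A^T$ and both hold symbol $s$, then by the transpose symmetry the cells $(i,j')$ and $(i',j)$ lie in the off-diagonal quadrants $B$ and $B^T$, and one checks from the cyclic structure that they hold a common symbol $t$. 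That is exactly a $2\times 2$ subsquare $\{(i,j),(i,j'),(i',j),(i',j')\}$ with entries $s,t,t,s$, hence a $2\times2$ trade exchanging the two copies of $s$ (and simultaneously the two copies of $t$). The bulk of the even-order case is therefore a direct verification that the cyclic arithmetic makes the fourth corner agree; I would set up coordinates so this is a one-line congruence check rather than a case analysis.

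For the odd-order extension, the standard move is to take the $2k\times 2k$ square $L$ just built with $n = 2k+1$, and adjoin one new row, one new column, and one new symbol $n$, using the classical ``prolongation'' / Cruse-type construction: pick a transversal-like diagonal of $L$, push those $2k$ entries out into the new row and new column, write the symbol $n$ into the $2k$ vacated cells and into the new corner cell $(n,n)$, and fix up the two new border lines so that each still contains every symbol once. This yields a genuine $n\times n$ latin square. The cells that can \emph{lose} the $2\times2$-trade property are exactly those touched by the surgery: the $2k$ cells on the chosen diagonal whose symbol changed to $n$, the $2k$ cells of the new last row, the $2k$ cells of the new last column, and the corner — together with at most a constant number of cells whose partner got moved. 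Counting generously, $2k + 2k + 2k = 3n - 3$ altered cells plus a small constant of bookkeeping cells gives the claimed bound of $3n+7$; I would just make sure the constant is honest and not optimize it.

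The main obstacle, and the step I would spend the most care on, is the odd-order patch: I need the diagonal I pull out to actually be a transversal of $L$ (so that exactly one copy of each of $2k$ symbols is removed and the border lines can be completed cleanly), and I need to argue that \emph{every} cell of $L$ not on that transversal and not in the new border still sees an opposite-quadrant partner with which it forms an undisturbed $2\times 2$ trade — i.e.\ that the surgery did not silently destroy a corner of some $2\times2$ square anchored far away. Because each $2\times 2$ square involves four cells, altering the $3n-3$ cells above can in principle wound up to $4(3n-3)$ squares, so the honest claim must be phrased as ``the property survives at all but $3n+7$ \emph{cells}'' (each surviving cell retains \emph{some} good partner), not ``all but $3n+7$ squares''; I would make that distinction explicit and check that the cyclic square is rich enough in $2\times2$ subsquares that deleting one transversal's worth of corners still leaves every untouched cell with a partner. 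Everything else — the existence of $L$, the trade verification in the even case — is routine modular arithmetic.
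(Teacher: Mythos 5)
Your even-order construction and the prolongation idea for odd $n$ are the same route the paper takes (circulant blocks $A$, $B$ with a one-line congruence check giving the opposite-quadrant $2\times 2$ subsquares, then adjoin a row, column, and new symbol along a transversal). However, there is a genuine gap exactly at the step you flagged but did not resolve: the existence of the transversal. The paper exhibits an explicit transversal of the $4k\times 4k$ square, so your plan goes through when $n\equiv 1 \pmod 4$; but when $n = 4k-1$, the square of order $4k-2$ produced by this construction has \emph{no} transversal at all, so ``pick a transversal-like diagonal of $L$'' is impossible as stated, and your argument breaks for half of the odd orders. The paper's fix is to first perform three specific $2\times 2$ trades (using the opposite-quadrant structure itself) on the order-$(4k-2)$ square to create a square that does admit an explicit transversal, and only then prolong; the extra cells disturbed by these preliminary trades are also why the final count is $3n+7$ rather than the roughly $3n-3$ you obtain from the border and diagonal alone. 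Without an argument producing a transversal (or a modification of the square that has one) in the $n\equiv 3\pmod 4$ case, the odd-order half of the lemma is not established. Your remaining concern --- that the surgery might silently destroy far-away $2\times 2$ partners --- is handled by the richness you already noted: each undisturbed cell lies in about $n/2$ opposite-quadrant subsquares, so losing the few that touch the modified cells still leaves partners available, and the property need only be surrendered on the explicitly altered cells.
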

\begin{proof}
For even values of $n$, we can simply use the following construction used by Chetwynd and H\"aggkvist in their proof.
\begin{align*}
L = \begin{array}{|c|c|c|c||c|c|c|c|}
\hline 1 & 2 & 3 & 4 & 5 & 6 & 7 & 8 \\
\hline 4 & 1 & 2 & 3 & 8 & 5 & 6 & 7 \\
\hline 3 & 4 & 1 & 2 & 7 & 8 & 5 & 6 \\
\hline 2 & 3 & 4 & 1 & 6 & 7 & 8 & 5 \\
\hhline{|=|=|=|=||=|=|=|=|} 
	   5 & 8 & 7 & 6 & 1 & 4 & 3 & 2 \\
\hline 6 & 5 & 8 & 7 & 2 & 1 & 4 & 3 \\
\hline 7 & 6 & 5 & 8 & 3 & 2 & 1 & 4  \\
\hline 8 & 7 & 6 & 5 & 4 & 3 & 2 & 1  \\
\hline
\end{array}
\end{align*}

In general, their construction is the following:\ if we set $A$ as the $k \times k$ circulant matrix on symbols $\{1, \ldots k\}$ and $B$ as the $k \times k$ circulant matrix on symbols $\{k+1, \ldots 2k\}$, we can define $L$ as the $n \times n$ latin square given by $\begin{array}{|c|c|} \hline A & B \\ \hline B^T & A^T \\ \hline\end{array}$.  An example for $n = 8$ is provided above.

This latin square $L$, as noted by Chetwynd and H\"aggkvist, has the following property:\ every cell in $L$ is involved in precisely $n/2$ distinct $2 \times 2$ subsquares.  To see this, notice that another way to describe $L$ is as the following.
\begin{align*}
L(i,j) = \left\{ \begin{array}{ll}
j-i + 1 \mod k & \textrm{for } i, j \leq k, \\
i-j + 1 \mod k   & \textrm{for } i >  k, j > k, \\
(j-i + 1 \mod k) + k  & \textrm{for } i \leq k, j > k, \\
 (i-j + 1 \mod k) + k  & \textrm{for }  i > k, j \leq k, \\
\end{array} \right.
\end{align*}
Take any cell $(i,j)$ in our latin square $L$.  Pick any other cell $(i,y)$ from the same row as $(i,j)$, but from the opposite quadrant.  Pick the cell $(x,j)$ so that it has the same symbol as the symbol in $(i,y)$:\ then we have that 
\begin{align*}
y-i + 1 \equiv x-j + 1 \mod k.
\end{align*}
This implies that
\begin{align*}
y-x + 1 \equiv i-j + 1 \mod k;
\end{align*}
i.e that the symbols in cells $(i,j)$ and $(x,y)$ are the same.  Therefore, any cell $(i,j)$ in our latin square $L$ is involved in precisely $n/2$-many $2 \times 2$ subsquares:\ one for every cell in the same row and opposite quadrant.

For $n = 4k +1$ for some $k$, we can augment Chetwynd and H\"aggkvist's construction as follows.  First, use the construction above to construct a $4k \times 4k$ latin square $L$. Now, consider the transversal of $L$ consisting of the following cells.
\begin{center}
$(1,1), (2,3), (3,5), (4,7) \ldots (k, 2k-1), $\\
$(k+1,2k+1), (k+2,2k+3),  \ldots (2k, 4k-1), $\\
$(2k+1, 2k+2), (2k+2,2k+4), \ldots (3k, 4k)$\\
$(3k+1, 2), (3k+2, 4), \ldots (4k,2k)$.\\
\end{center}
\footnotesize
\begin{align*}
\begin{array}{|c|c|c|c|c|c||c|c|c|c|c|c|}
\hline \colCell{umbra}{1} & 2 & 3 & 4 & 5 & 6 &A & B & C & D & E & F \\
\hline 6 & 1 & \colCell{umbra}{2} & 3 & 4 & 5 &F & A & B & C & D & E \\
\hline 5 & 6 & 1 & 2 & \colCell{umbra}{3} & 4 &E & F & A & B & C & D \\
\hline 4 & 5 & 6 & 1 & 2 & 3 & \colCell{umbra}{D} & E & F & A & B & C \\
\hline 3 & 4 & 5 & 6 & 1 & 2 &C & D & \colCell{umbra}{E} & F & A & B \\
\hline 2 & 3 & 4 & 5 & 6 & 1 &B & C & D & E & \colCell{umbra}{F} & A \\
 \hhline{>{\doublerulesepcolor{white}}|=|=|=|=|=|=|t|=|=|=|=|=|=|}
         A & F & E & D & C & B &1 & \colCell{umbra}{6} & 5 & 4 & 3 & 2 \\
\hline B & A & F & E & D & C &2 & 1 & 6 & \colCell{umbra}{5} & 4 & 3 \\
\hline C & B & A & F & E & D &3 & 2 & 1 & 6 & 5 & \colCell{umbra}{4} \\
\hline D & \colCell{umbra}{C} & B & A & F & E &4 & 3 & 2 & 1 & 6 & 5 \\
\hline E & D & C & \colCell{umbra}{B} & A & F &5 & 4 & 3 & 2 & 1 & 6 \\
\hline F & E & D & C & B & \colCell{umbra}{A} &6 & 5 & 4 & 3 & 2 & 1 \\
\hline
\end{array}\\
\textrm{(The above transversal in a }13 \times 13\textrm{ latin square.)}~\quad
\end{align*}
\normalsize

Using this transversal, turn $L$ into a $4k+1 \times 4k+1$ latin square $L'$ via the following  construction:\ take $L$, and augment it by adding a new blank row and column.  Fill each cell in this blank row with the corresponding element of our transversal that lies in the same column as it; similarly, fill each cell in this blank column with the corresponding transversal cell that is in the same row.  Finally, replace the symbols in every cell in our transversal (as well as the blank cell at the intersection of our new row and column) with the symbol $4k+1$.  This creates a $n \times n$ latin square, such that all but $3n-2$ cells are involved in precisely $(n/2) - 2$ distinct $2\times 2$ subsquares.

For $n = 4k-1$, things are slightly more difficult.  While we can use our earlier construction to create a $4k-2 \times 4k-2$ latin square, the resulting square does not have a transversal.  However, we can use our $2 \times 2$ substructure to slightly modify this square so that it will have a transversal, and then proceed as before.  We outline the process for creating this transversal below:
\begin{itemize}
\item First, use the Chetwynd and H\"aggkvist construction to create a $(4k-2) \times (4k-2)$ latin square $L$.
\item In our discussion earlier, we noted that for any pair of cells $(i,j), (i,k)$ in the same row but from different quadrants, there is a $2 \times 2$ subsquare that contains those two elements.  Take the $2 \times 2$ subsquare corresponding to the cells containing $2$ and $4k-2$ in the last row, and perform the $2 \times 2$ trade corresponding to this subsquare.
\item Similarly, take the $2 \times 2$ subsquare corresponding to the cells containing $2k-1$ and $4k-2$ in the far-right column, and perform the $2 \times 2$ trade corresponding to this subsquare.
\item With these two trades completed, look at the four cells determined by the last two rows and columns of our latin square.  They now form a $2 \times 2$ subsquare of the form $\begin{array}{|c|c|}\hline 1 & 4k-2 \\ \hline 4k-2 & 1\\  \hline \end{array}$.  Perform the trade corresponding to this $2 \times 2$ subsquare.
\end{itemize}

Once this is done, we can find a transversal by simply taking the cells
\begin{center}
$(1,1), (2,3), (3,5), (4,7) \ldots (k, 2k-1), $\\
$(k+1,2k), (k+2,2k+2),  \ldots (2k-1, 4k-4), $\\
$(2k, 2k+1), (2k+2,2k+3), \ldots (3k - 2, 4k - 3),$\\
$(3k - 1, 2), (3k, 4), \ldots (4k - 3,2k - 2),$\\
$(4k-2, 4k-2).$
\end{center}
\footnotesize
\begin{align*}
\begin{array}{|c|c|c|c|c|c|c||c|c|c|c|c|c|c|}
\hline \colCell{umbra}{1} & 2 & 3 & 4 & 5 & 6 & G      &      A & B & C & D & E & F & 7 \\
\hline 7 & 1 & \colCell{umbra}{2} & 3 & 4 & 5 & 6      &      G & A & B & C & D & E & F \\
\hline 6 & 7 & 1 & 2 & \colCell{umbra}{3} & 4 & 5      &      F & G & A & B & C & D & E \\
\hline 5 & 6 & 7 & 1 & 2 & 3 & \colCell{umbra}{4}      &      E & F & G & A & B & C & D \\
\hline 4 & 5 & 6 & 7 & 1 & 2 & 3      &      \colCell{umbra}{D} & E & F & G & A & B & C \\
\hline 3 & 4 & 5 & 6 & 7 & 1 & 2      &      C & D &\colCell{umbra}{E} & F & G & A & B \\
\hline G & 3 & 4 & 5 & 6 & 7 & 1      &      B & C & D & E & \colCell{umbra}{F} & 2 & A \\
 \hhline{>{\doublerulesepcolor{white}}|=|=|=|=|=|=|=|t|=|=|=|=|=|=|=|}
\hline A & G & F & E & D & C & B      &      1 & \colCell{umbra}{7} & 6 & 5 & 4 & 3 & 2 \\
\hline B & A & G & F & E & D & C      &      2 & 1 & 7 & \colCell{umbra}{6} & 5 & 4 & 3 \\
\hline C & B & A & G & F & E & D      &      3 & 2 & 1 & 7 & 6 & \colCell{umbra}{5} & 4 \\
\hline D & \colCell{umbra}{C} & B & A & G & F & E      &      4 & 3 & 2 & 1 & 7 & 6 & 5 \\
\hline E & D & C & \colCell{umbra}{B} & A & G & F      &      5 & 4 & 3 & 2 & 1 & 7 & 6 \\
\hline F & E & D & C & B & \colCell{umbra}{A} & 7      &      6 & 5 & 4 & 3 & 2 & G & 1 \\
\hline 2 & F & E & D & C & B & A      &      7 & 6 & 5 & 4 & 3 & 1 & \colCell{umbra}{G} \\
\hline
\end{array}\\
\qquad\textrm{(The above transversal in a }14 \times 14\textrm{ latin square.)} \qquad \quad
\end{align*}
\normalsize
Using this transversal, we can extend our latin square to a $4k-1 \times 4k-1$ latin square, using the same methods as in the $n = 4k+1$ case.  This completes the first step of our outline.
\end{proof}

\indent Our next lemma, roughly speaking, claims the following:\ given any latin square generated by Lemma $\ref{lem1}$, we can pick any row and exchange the contents of ``many'' pairs of cells within that row, without disturbing other cells in that row, more than 16 cells in our entire latin square, or some prescribed small set of symbols that we'd like to avoid disturbing in general.  Furthermore, the following result claims that we can do this \textit{repeatedly}:\ i.e.\ that we can apply this result not just to latin squares generated by Lemma $\ref{lem1}$, but to latin squares generated by Lemma $\ref{lem1}$ that have had the contents of up to $k n^2$ many cells disturbed by such trades, for some constant $k$ that we will determine later.  We state this claim formally below.
\begin{lem}\label{lem2}
Initially, let $L$ be one of the $n\times n$ latin squares constructed by Lemma $\ref{lem1}$, and $P$ be an $ \epsilon$-dense partial latin square.  Perform some sequence of trades on $L$, and suppose that after these trades are completed that the following holds:\ no more than $kn^2$ of $L$'s cells either have had their contents altered via such trades, or were part of the $3n + 7$ potentially-disturbed cells that were disturbed in the execution of Lemma $\ref{lem1}$.  As well, fix any set $\{t_1, \ldots t_a\}$ of symbols.

Fix any positive constant $d > 0$.  Then, for any row $r_1$ of $L$ and all but 
\begin{itemize}
\item $2\frac{k}{d}n + \epsilon n + a$ choices of $c_1$, and
\item $4\frac{k}{d}n + 2d n + 3\epsilon n + a+ 1$ choices of $c_2,$
\end{itemize}
there is a trade on $L$ that
\begin{itemize}
\item does not change any cells on which $P$ and $L$ currently agree,
\item changes the contents of at most 16 cells of $L$,
\item does not use any of the symbols $\{t_1, \ldots t_a\}$, and
\item swaps the symbols in the cells $(r_1, c_1)$ and $(r_2, c_2)$,
\end{itemize}
as long as the following two equations hold:
\begin{itemize}
\item $3 \leq n - 4\frac{k}{d}n - 6d n  - 6 \epsilon n - 3a, \textrm{ and}$
\item $12 \leq n - 12d n -  12\epsilon n - 4a.$
\end{itemize}
\end{lem}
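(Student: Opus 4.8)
The plan is to assemble the required trade as a short composition of the $2\times2$ trades furnished by Lemma~\ref{lem1}, routed through one or two auxiliary columns. Recall the mechanism behind that lemma: in $L$ as originally built, for any row $r$ and any two columns $j,j'$ lying in opposite halves (index $\le k$ versus index $>k$) there is a genuine $2\times2$ subsquare on rows $\{r,s\}$ and columns $\{j,j'\}$, where the partner row $s$ is pinned down by $r,j,j'$ through the circulant formula for $L$; the associated $2\times2$ trade keeps $L$ proper, changes only those four cells, and restricts on row $r$ to the transposition of columns $j$ and $j'$. To exchange the contents of $(r_1,c_1)$ and $(r_1,c_2)$ I would route through auxiliary columns: if $c_1,c_2$ lie in the same half, through a single column $c_3$ of the other half using $(c_1\,c_2)=(c_1\,c_3)(c_2\,c_3)(c_1\,c_3)$; if they lie in opposite halves, through two columns $c_3,c_4$ (one in each half) using $(c_1\,c_2)=(c_1\,c_3)(c_4\,c_3)(c_4\,c_2)(c_4\,c_3)(c_1\,c_3)$. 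In both decompositions every factor is a legal ``opposite-half'' transposition, the auxiliary columns are returned to their original symbols (so no cell of row $r_1$ besides $(r_1,c_1),(r_1,c_2)$ moves), the composite uses at most five $2\times2$ trades and thus alters at most $2+5\cdot2=12\le16$ cells, and it moves only the symbols $L(r_1,c_1)$, $L(r_1,c_2)$, and the one or two symbols then sitting in the auxiliary columns. Wherever one of the four cells of a constituent trade has been altered --- by an earlier trade, or as one of the $3n+7$ exceptional cells of Lemma~\ref{lem1} --- so that the literal subsquare is gone, I would substitute the \emph{improper} $2\times2$ trade on the same four cells; it performs the same column transposition on row $r_1$ at the cost of an improper value in one partner cell, and the constituent trades would be ordered so that every auxiliary or partner cell holds a proper value again at the end.

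The remaining work is to choose $c_1$, $c_2$ and the auxiliary column(s) so that the composite trade is legal: it must touch no cell where $P$ and $L$ currently agree and move no symbol of $\{t_1,\dots,t_a\}$. The organising device is to call a column \emph{spoiled} if more than $dn$ of its cells have been altered; since at most $kn^2$ cells have been altered in all, at most $\tfrac kd n$ columns are spoiled. One then lists the obstructions to $c_1$ --- $L(r_1,c_1)$ being a forbidden symbol ($\le a$ columns), the cell $(r_1,c_1)$ being an existing $P$--$L$ agreement ($\le\epsilon n$ columns, from the $\epsilon$-density of $P$ along row $r_1$), and column $c_1$ being spoiled or otherwise unusable for the trades that must pass through it ($\le 2\tfrac kd n$ columns) --- which sum to the stated $2\tfrac kd n+\epsilon n+a$; the obstructions to $c_2$ form the same list, but column $c_2$ is traversed by one more constituent trade and must also absorb the symbol $L(r_1,c_1)$ being deposited into it, which enlarges its bound to $4\tfrac kd n+2dn+3\epsilon n+a+1$. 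Finally, once $c_1,c_2$ are good, an auxiliary column has to avoid the spoiled columns, the $\le\epsilon n$ agreements of $P$ and $L$ along row $r_1$, the $\le dn$ columns whose required partner cells are altered, and the $\le a$ columns whose relevant cells carry a symbol of $\{t_i\}$; the two displayed inequalities $3\le n-4\tfrac kd n-6dn-6\epsilon n-3a$ and $12\le n-12dn-12\epsilon n-4a$ are precisely the statements that these exclusions never consume a whole half of the columns, so a legal auxiliary column always remains.

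The step I expect to be the real obstacle is this last one: controlling how many auxiliary choices are destroyed by \emph{altered} cells lying in the partner rows of the constituent trades. Naively a single altered cell could spoil the auxiliary choice for very many target columns, which would sink the count; the way out is that the partner row $s$ depends \emph{injectively} on the auxiliary column --- it is determined by the circulant formula for $L$ --- so an altered cell not lying in column $c_1$ or $c_2$ costs only $O(1)$ auxiliary choices, whereas altered cells inside columns $c_1$ or $c_2$ are held to at most $dn$ apiece because those columns were discarded when spoiled. Converting these injectivity and concentration observations into the precise counts above, and checking that the improper substitutions neither collide across the (at most five) constituent trades nor leave an improper value in the final square, is where the genuine effort lies; what remains is the arithmetic already recorded in the statement.
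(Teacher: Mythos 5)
Your plan has a genuine gap at its center, and it is exactly the step you flag at the end as "where the genuine effort lies." The decomposition of the swap $(c_1\,c_2)$ into opposite-half transpositions routed through auxiliary columns does not consist of genuine $2\times 2$ trades beyond the first factor, \emph{even in the completely undisturbed square}. Concretely: after the first factor $(c_1\,c_3)$ the cell $(r_1,c_3)$ holds $s_1=L(r_1,c_1)$, and the second factor $(c_2\,c_3)$ would need a single partner row currently containing $s_1$ in column $c_2$ and $s_2$ in column $c_3$; in the circulant construction of Lemma~\ref{lem1} those two cells lie in different halves of the square (e.g.\ with $r_1,c_1,c_2$ in the top-left quadrant, $s_1$ sits in column $c_2$ in a top row while $s_2$ sits in column $c_3$ in a bottom row), so no such row exists and the factor is forced to be improper. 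Thus improprieties arise generically, not merely at previously altered or exceptional cells as your substitution rule assumes; they land in \emph{different} partner rows for different factors (the partner row of the third factor is generally not the row carrying the impropriety from the second), and the assertion that "the constituent trades would be ordered so that every auxiliary or partner cell holds a proper value again at the end" is precisely what has to be proved and is not. The paper's proof is, in essence, the explicit resolution of this problem: it performs one improper $2\times 2$ trade on rows $r_1,r_2$ and columns $c_1,c_2$ (so no structure at all is needed inside the possibly overloaded row $r_1$), and then cancels the resulting improprieties in row $r_2$ by a short chain of improper $2\times 2$ trades whose shape splits into two cases according to whether the rows $r_3,r_4$ (holding $s_2$ in column $c_1$ and $s_1$ in column $c_2$) lie in the same half or in opposite halves; the $16$-cell bound and the two displayed inequalities come out of that explicit chain.

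Relatedly, your counting is a restatement of the lemma rather than a derivation of it. The bound $4\frac{k}{d}n+2dn+3\epsilon n+a+1$ for $c_2$ arises in the paper from specific cells of the specific trade (the undisturbed partner cells in column $c_1$ and symbol $s_1$, the non-overloadedness of $r_3,r_4,s_2,c_2$, and three named cells where $P$ and $L$ must not agree), and the two final inequalities are the counts for the remaining free choices --- the auxiliary row $r_2$ in the same-half case, and $r_2$ together with the extra symbol $s_6$ in the opposite-half case --- not a generic statement that "these exclusions never consume a whole half of the columns." Your injectivity observation about partner rows is a reasonable heuristic for why a count of this shape should be possible, but without pinning down the actual cells your composite trade touches (which you cannot do until the cancellation scheme is fixed), the constants in the statement cannot be recovered. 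So the proposal as written does not establish the lemma; to repair it you would either have to work out an explicit impropriety-cancelling chain (at which point you essentially reproduce the paper's two-case construction) or find a genuinely different cancellation argument, which you have not supplied.
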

\begin{proof}

Call a row, column, or symbol in $L$ $d$-\textbf{overloaded} (or just overloaded, for short) if $>\kern-2pt d n$ of the entries in this row/column/symbol have had their contents changed by the trades we have performed thus far on $L$, counting the $3n+7$ possibly-disturbed cells from $L$ 's construction as such changed cells.  Note that no more than $\frac{kn^2}{d n} = \frac{k}{d} n$ rows, columns, or symbols are overloaded.

Intuitively, overloaded rows are going to be ``difficult'' to work with:\ because most of the structure in our latin square no longer exists within that row, we will have relatively few ways to reliably manipulate the cells in that row.  Conversely, if some row is not overloaded, we know that the contents of most of the cells within this row have not been disturbed; in theory, this will make manipulating this row much easier, as we will have access to a lot of the structure we have built into our latin square $L$.  (Similar comments apply to overloaded symbols and columns.)

With these comments as our motivation, we begin constructing our trade.  Fix some row $r_1$:\ we want to show that for most pairs of cells within this row, there is a trade which exchanges their contents without disturbing many other cells in $L$.  Naively, we might hope that for most pairs of cells in our row, we can find the following trade:
\begin{center}
\includegraphics[width=2.7in]{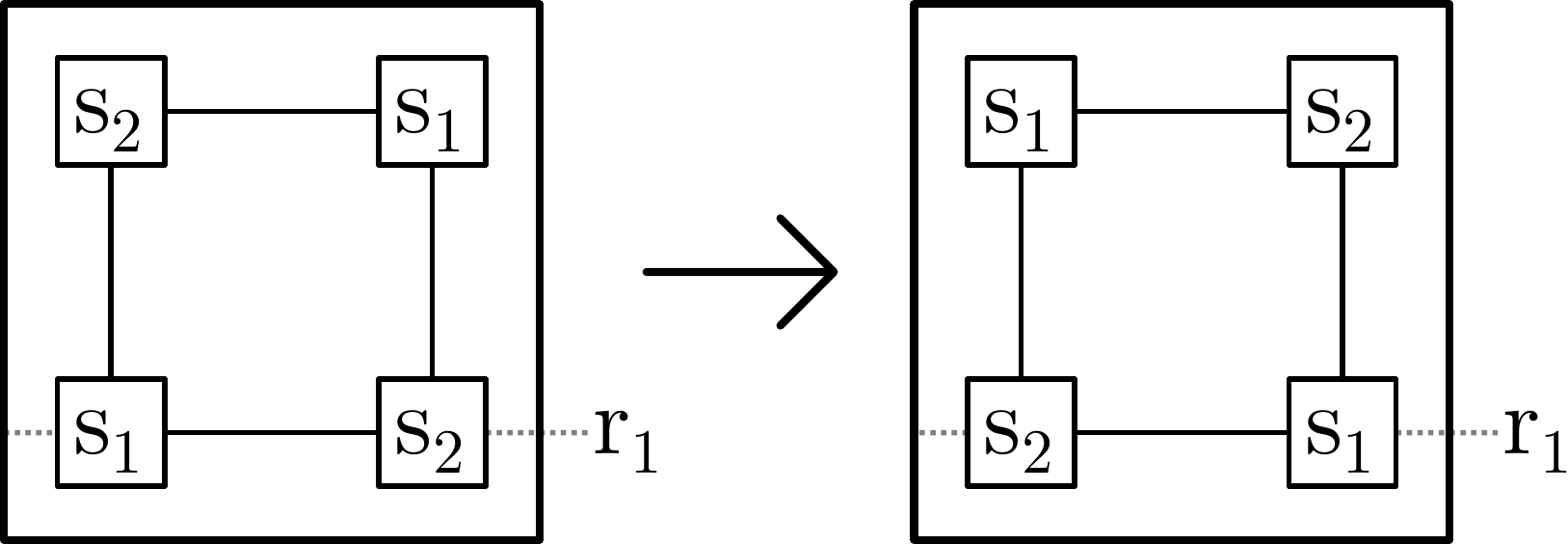}
\end{center}
If this situation occurs, we can simply perform the $2 \times 2$ trade illustrated above to swap the two cells containing $s_1$ and $s_2$.  The issue, however,  is that this situation may never come up:\ if $r_1$ is an overloaded row, for example, it is entirely possible that \textbf{none} of its elements are involved in \textbf{any} $2 \times 2$ subsquares.  To fix this, we use the technique of \textbf{improper trades}:\ specifically, we will choose some other row $r_2$, and perform the improper trade
\begin{center}
\includegraphics[width=3.2in]{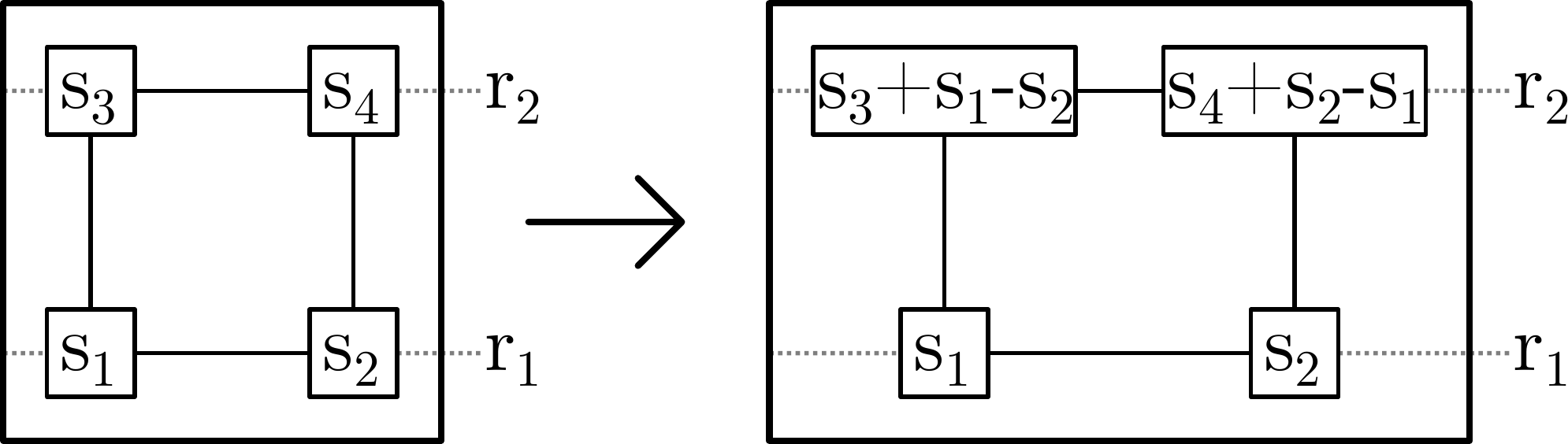}
\end{center}

This accomplishes our original goal of exchanging these two elements in $r_1$; however, we may now have an improper latin square, if either $s_3 \neq s_2$ or $s_4 \neq s_1$.  The aim of this lemma is to construct a \textbf{proper} trade on our latin square:\ therefore, we need a way to augment this trade so that it becomes a proper one.  This is not too difficult to do:\ by repeatedly stringing together improper $2 \times 2$ trades that use nonoverloaded rows/columns/symbols wherever possible, and using cells that have not been disturbed by earlier trades where possible, we can augment the improper trade above to one of two possible proper trades. We illustrate the first of these below:
\begin{center}
\includegraphics[width=3.9in]{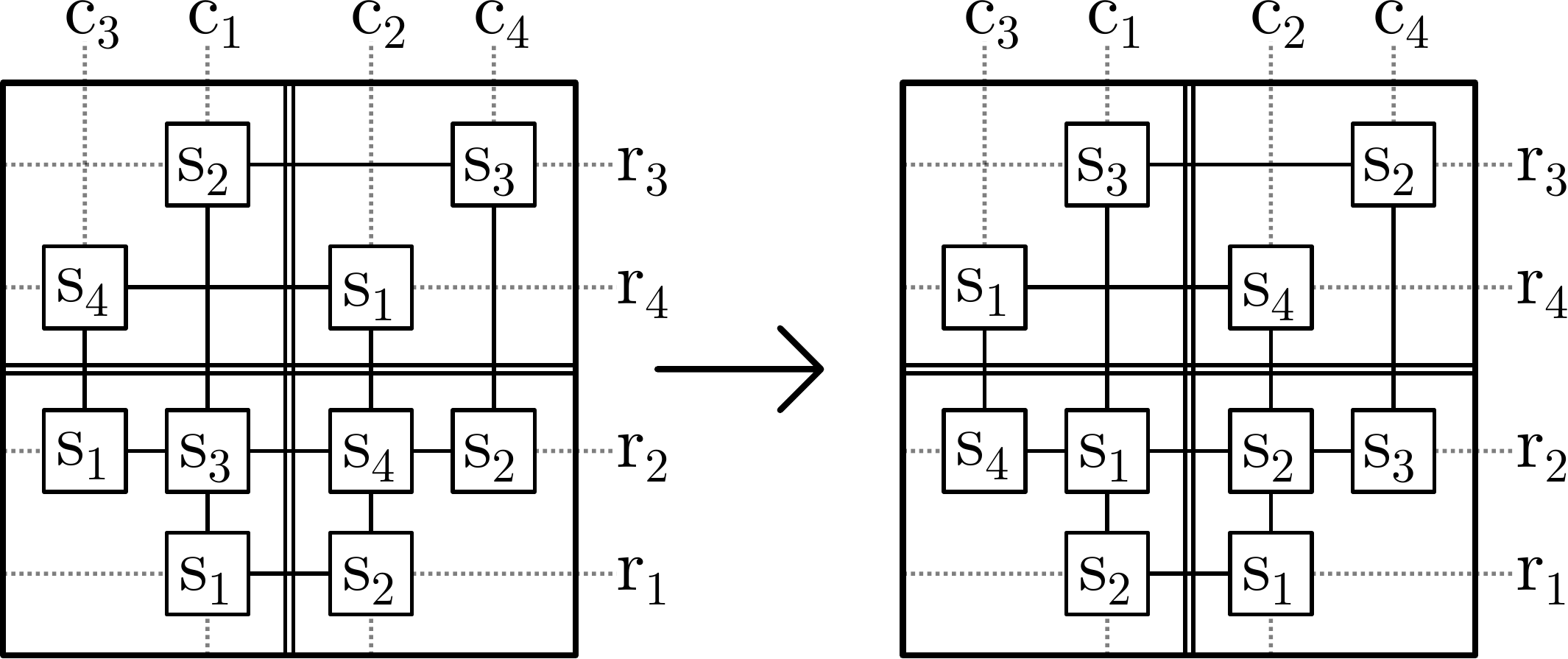}
\end{center}

The simpler trade of the two, illustrated above, occurs in the following situation.  Look at the two cells $(r_1, c_1)$ and $(r_1, c_2)$, and suppose that the symbols in both of these cells are not overloaded.  For each of these two cells, there are two possibilities:\ either the cell $(r_1, c_i)$ is still in the same quadrant that the symbol $s_i$ started in, or it has been permuted to the other quadrant.  The trade we have just drawn above occurs in the situation where either both of these cells are still in the same quadrants that their corresponding symbols started in, or when neither of these cells are in the same quadrants that their corresponding symbols start in.  (This condition is equivalent to asking that rows $r_3, r_4$ are both in the same half of our square, which is necessary for our choice of $r_2$.)

We show that this trade can always be found using the following heuristic:\ we will choose the rows/columns/symbols involved in this trade one by one, choosing each so that as many of the variables determined by our choice are involved in nonoverloaded rows/columns/symbols as possible.  Furthermore, we will also attempt to insure that as many cells as possible in our trade have never had their contents disturbed by either earlier trades on $L$ or from being part of the potential $3n+7$ disturbed cells in $L$'s construction.  Finally, we will also make sure that our choices never involve cells where $L$ and $P$ currently agree.

Start by choosing $s_1$ such that the following properties hold:
\begin{itemize}
\item The symbol $s_1$ is not overloaded.  As well, if $c_1$ is the column such that $(r_1, c_1)$ contains $s_1$, the column $c_1$ should also not be overloaded.  This eliminates at most $2\frac{k}{d}n$ choices.
\item The cell $(r_1, c_1)$ is not one at which $P$ and $L$ currently agree.  This eliminates at most $\epsilon n$ choices.
\item The symbol in the cell $(r_1, c_1)$ is not one of $\{t_1, \ldots t_a\}$.  This eliminates at most $a$ choices.
\end{itemize}
Therefore, we have 
\begin{align*}
n - 2\frac{k}{d }n - \epsilon n - a
\end{align*}
choices for $s_1$, as claimed.

We choose the second symbol $s_2$ so that a similar set of properties hold:
\begin{itemize}
\item The symbol $s_2$ should not be the same as $s_1$, nor should it be one of $\{t_1, \ldots t_a\}$.  This eliminates at most $a+1$ choices.
\item The cell in column $c_1$ containing $s_2$ has not been used in any earlier trades; as well, the cell containing the symbol $s_1$ in the column $c_2$ has not been used by any earlier trades.  Because neither $c_1$ nor $s_1$ are overloaded, we know that at most $dn$ entries in either of these objects have been used in previous trades.  Therefore, this restriction eliminates at most $2dn$ choices.
\item The rows $r_3, r_4$ containing these two undisturbed cells are not overloaded.  As well, we ask that neither the symbol $s_2$ nor the column $c_2$ is overloaded.  This eliminates at most $4\frac{k}{d}n$ choices.
\item Neither the cell $(r_1, c_2)$ nor the two undisturbed cells are cells at which $P$ and $L$ currently agree.  This eliminates at most $3\epsilon n$ more choices.
\end{itemize}
This leaves
\begin{align*}
n - 4\frac{k}{d}n - 2d n -  3\epsilon n - a - 1
\end{align*}
choices for $s_2$, again as claimed.

We have one final choice to make here:\ the row $r_2$.  Observe that because the cells $(r_1, c_i)$ are either both in the same quadrant that the cell containing $s_i$ in row $r_1$ started in, or both permuted to the quadrants they were not in, the cells $(r_3, c_1)$ and $(r_4, c_2)$ are both either in the top half or both in the bottom half of our latin square.  Using this observation, we can choose $r_2$ so that the following conditions hold:

\begin{itemize}
\item The row $r_2$ is in the opposite half from the rows $r_3, r_4$, and is not $r_1$.  This eliminates at most $\lceil n/2 \rceil + 1$  choices.
\item None of the cells $(r_2, c_1)$, $(r_2, c_2)$, $(r_2, c_3)$, $(r_2, c_4)$,  $(r_3, c_4)$, or $(r_4, c_3)$  have been used in prior trades.  Because neither the columns $c_1$, $c_2$, nor the symbols $s_1, s_2$, nor the rows $r_3, r_4$ are overloaded, this restriction eliminates at most $6d n$ choices.  
\item None of these cells are cells at which $P$ and $L$ currently agree.  This eliminates at most $6\epsilon n$ choices.
\item Neither $s_3$ or $s_4$ are equal to any of the symbols $\{t_1, \ldots t_a\}$.  This eliminates at most $2a$ choices.
\end{itemize}
This leaves
\begin{align*}
\left\lfloor\frac{n}{2}\right\rfloor - 6d n -  6\epsilon n - 2a - 1
\end{align*}
choices for the row $r_2$. 

Notice that because $r_2$ has been chosen to be from the opposite half of $r_3, r_4$, and none of these cells nor the earlier have been disturbed by earlier trades, we know that the symbol $s_3$ is in the cell $(r_3, c_4)$ and the symbol $s_4$ is in the cell $(r_4, c_3)$:\ this is because of $L$'s previously-discussed ``many $2 \times 2$ subsquares'' structure.  Therefore, whenever we can make all of these choices,  we have constructed the trade that we claimed was possible.  

The slightly more complex trade that we have to consider is when the above choice of $r_2$ is impossible:\ i.e.\ when one of the rows $r_3,r_4$ is in the top half and the other is in the bottom half of $L$.  We deal with this obstruction via the following trade, which (again) was constructed by repeatedly applying improper $2 \times 2$ trades.
\begin{center}
\includegraphics[width=5in]{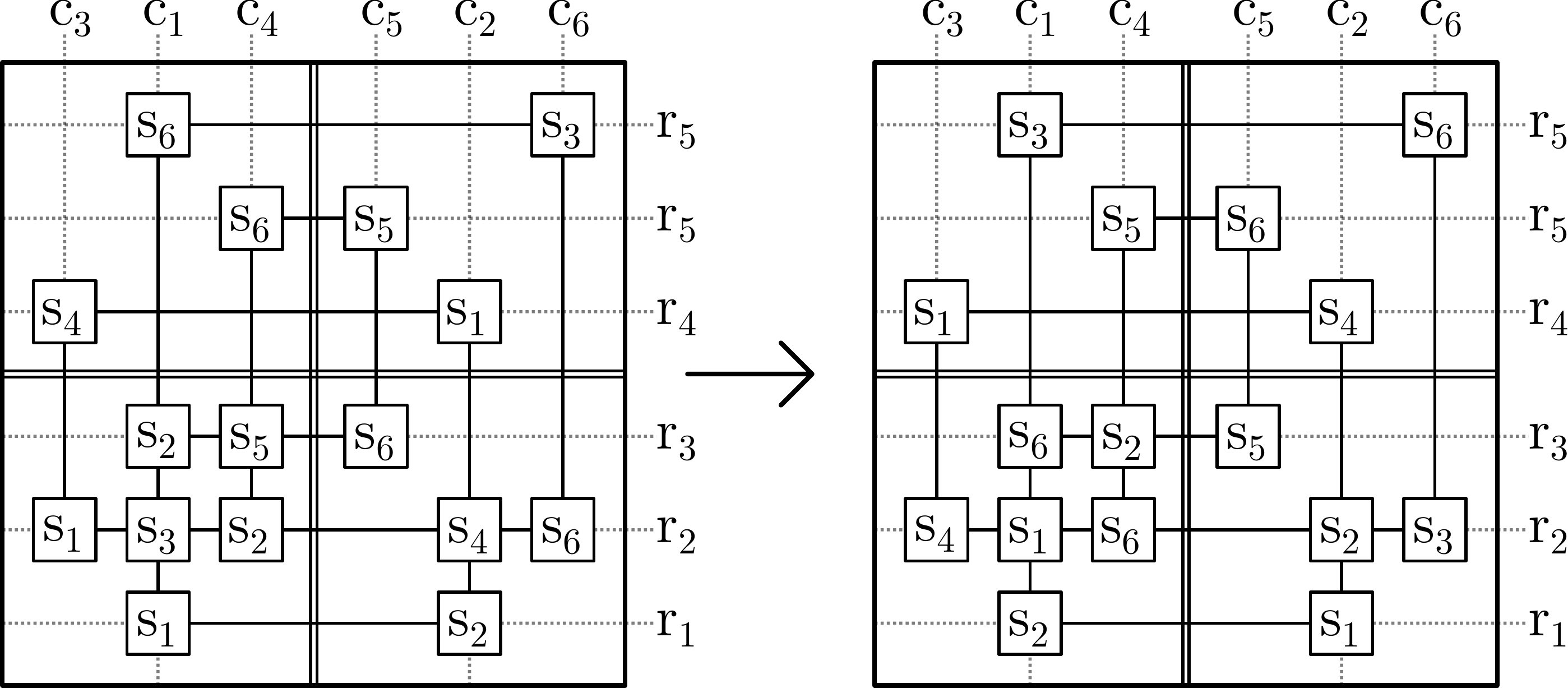}
\end{center}

Choose $s_1, s_2$ exactly as before.  We will now choose values of $r_2$ and $s_6$ so that the trade illustrated above exists.

We start out with $n$ possible choices of $r_2$:\ for any such choice, either $r_2$ and $r_3$ will be in different halves, or $r_2$ and $r_4$ will be in different halves of our latin square.  Assume without loss of generality that $r_2$ and $r_4$ are in different halves of $L$:\ the case where $r_2$ and $r_3$ are in different halves is identical.  In this case, pick $r_2$ such that the following properties hold:
\begin{itemize}
\item  None of the cells $(r_2, c_1)$, $(r_2, c_2)$, $(r_2, c_3)$, $(r_2, c_4)$,  $(r_4, c_3)$, or $(r_3, c_4)$  have been disturbed in prior trades.  Because neither the columns $c_1$, $c_2$, nor the symbols $s_1, s_2$, nor the rows $r_3, r_4$ are overloaded, this restriction eliminates at most $6d n$ choices. (Notice that because $r_2$ and $r_4$ are in different halves, we know that the same symbol $s_4$ is in $(r_4, c_3)$ and $(r_2, c_2)$; again, this is caused by $L$'s well-understood ``many $2\times 2$ subsquares'' structure.  However, unlike our earlier case, we cannot make a similar assumption for the cell $(r_3, c_4)$.)
\item None of the following are overloaded:\ the symbol $s_3$, the symbol $s_5$, the row $r_2$, or the column $c_4$.   Furthermore, none of the cells determined by these choices are in use in our trade thus far.  This eliminates at most $4\frac{k}{d}n + 2$ choices.
\item None of these cells are cells at which $P$ and $L$ currently agree.  This eliminates at most $6\epsilon n$ choices.
\item None of the symbols $s_3, s_4,$ or $s_5$ are equal to any of the symbols $\{t_1, \ldots t_a\}$.  This eliminates at most $3a$ choices.
\end{itemize}
This leaves at least
\begin{align*}
n - 4\frac{k}{d}n - 6d n  - 6 \epsilon n - 3a -2
\end{align*}
choices.

Before making our final choice, notice that in the original $\begin{array}{|c|c|}\hline A & B \\ \hline B^T & A^T \\ \hline\end{array}$ form of our latin square $L$, the symbols $s_3$ and $s_5$ have to originally have came from the same quadrant.  This is because none of the cells $(c_1, r_2),$ $(c_4, r_3),$ $(c_1, r_3),$ $(c_4, r_2)$ have been disturbed in prior trades, and the two cells $(c_2, r_4), (c_3, r_2)$ contain the same symbol $s_2$.

Using this observation, choose $s_6$ so that the following properties hold:
\begin{itemize}
\item Choose $s_6$ so that it is in the opposite half from the symbols $s_3, s_5$.  This eliminates at most $\lceil n/2\rceil$ choices.
\item None of the cells containing $s_6$ in columns $c_1, c_4$ or rows $r_2, r_3$, nor the cell containing $s_3$ in column $c_6$, nor the cell containing $s_5$ in column $c_5$, have been used in previous trades.  Because the columns $c_1, c_4$, rows $r_2, r_3$, and symbols $s_3, s_5$  are all not overloaded, this is possible, and eliminates at most $6 d n$ choices.
\item None of these cells are places where $P$ and $L$ agree.  This eliminates at most $6\epsilon n$ choices.
\item The symbol $s_6$ has not been chosen before, nor is it equal to any of the symbols $\{t_1, \ldots t_a\}$.  This eliminates at most $5+a$ choices.
\end{itemize}
This leaves at least 
\begin{align*}
\left\lfloor n/2\right\rfloor  - 6d n  - 6 \epsilon n - a - 5
\end{align*}
choices. 

Using our ``many $2 \times 2$ subsquares'' structure tells us that we have constructed the claimed trade.  Therefore, as long as we can make these choices, we can find one of these two trades.  By looking at all of the choices we make during our proof and choosing the potentially strictest bounds (under certain choices of $d, \epsilon, k, n$) we can see that such trades will exist as long as
\begin{align*}
3 &\leq n - 4\frac{k}{d }n - 6d n  - 6 \epsilon n - 3a, \textrm{ and} \\
12 &\leq n - 12d n -  12\epsilon n - 4a.\\
\end{align*}

\end{proof}
Note that an analogous result holds for exchanging the contents of almost any two cells in a given column, using the exact same proof methods.

The next lemma, built off of Lemma $\ref{lem2}$, is the main tool used in this paper.
\begin{lem}\label{lem3}
As before, let $L$ be one of the $n\times n$ latin squares constructed by Lemma $\ref{lem1}$, and $P$ be an $ \epsilon$-dense partial latin square.  Suppose that we have performed a series of trades on $L$ that have changed the contents of no more than $k n^2$ of $L$'s cells.  (Note that we count the $3n+7$ potentially-disturbed cells from the construction of $L$, when we enumerate these changed cells.)

Fix any cell $(r_1,c_1)$ such that $P(r_1,c_1)$ is filled and does not equal $L(r_1,c_1)$.  Then, there is a trade on $L$ that
\begin{itemize}
\item does not change any cells on which $P$ and $L$ currently agree,
\item changes the contents of at most 69 other cells of $L$, and
\item causes $P$ and $L$ to agree at the cell $(r_1,c_1)$, 
\end{itemize}
whenever we satisfy the bound
\begin{align*}
20 &\leq n - 12 n\sqrt{k} -  12\epsilon n.\\
\end{align*}
\end{lem}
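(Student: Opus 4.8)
The plan is to deduce this from Lemma~\ref{lem2} (and its column analogue) by a short routing argument. Write $s := L(r_1,c_1)$ and $t := P(r_1,c_1)$, so $s \neq t$, and making $L$ agree with $P$ at $(r_1,c_1)$ is exactly the task of placing the symbol $t$ into the cell $(r_1,c_1)$ by a proper trade. Let $c_2$ be the unique column of row $r_1$ with $L(r_1,c_2)=t$ and $r_2$ the unique row of column $c_1$ with $L(r_2,c_1)=t$; since $L(r_1,c_1)=s\neq t$ we have $c_2 \neq c_1$ and $r_2 \neq r_1$. The one clean structural input is this: because $P$ is a partial latin square with $P(r_1,c_1)=t$, the symbol $t$ does not recur in row $r_1$ or column $c_1$ of $P$, so $P(r_1,c_2)\neq t = L(r_1,c_2)$ and $P(r_2,c_1)\neq t = L(r_2,c_1)$; hence neither $(r_1,c_2)$ nor $(r_2,c_1)$ is currently a cell on which $P$ and $L$ agree. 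Consequently a trade that changes only $(r_1,c_1)$, $(r_1,c_2)$, $(r_2,c_1)$ together with further non-agreement cells is admissible.

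I would run Lemma~\ref{lem2} with $d = \sqrt{k}$ (so that $\tfrac{k}{d}n = dn = \sqrt{k}\,n$ and every exclusion count in that lemma becomes $O(\sqrt{k}\,n + \epsilon n)$) and with a forbidden-symbol set of size at most two, typically $\{s,t\}$. In the main case one simply takes the trade that Lemma~\ref{lem2}, applied to the row $r_1$, provides for swapping the contents of $(r_1,c_1)$ and $(r_1,c_2)$: this is a proper trade touching at most $16$ cells, it avoids all agreement cells, and it leaves $t$ at $(r_1,c_1)$; it is legitimate precisely because $(r_1,c_2)$ is not an agreement cell. If instead the forced column $c_1$ or $c_2$ (equivalently: the column $c_1$, the column $c_2$, or one of the symbols $s,t$) is $\sqrt{k}$-overloaded, so that Lemma~\ref{lem2} cannot be invoked with those columns as parameters, I would route the move through a ``good'' auxiliary column $c_3 \notin \{c_1,c_2\}$ --- one that is not overloaded, whose symbol in row $r_1$ is not overloaded and not forbidden, and whose cell $(r_1,c_3)$ is not an agreement cell --- realizing the transposition of columns $c_1,c_2$ in row $r_1$ as the composite of the three Lemma~\ref{lem2} swaps $(r_1,c_1)\leftrightarrow (r_1,c_3)$, $(r_1,c_3)\leftrightarrow (r_1,c_2)$, $(r_1,c_1)\leftrightarrow (r_1,c_3)$; and if the obstruction is on the column $c_1$ side I would work instead with the column analogue of Lemma~\ref{lem2} on column $c_1$, routing the transposition of rows $r_1,r_2$ through a good auxiliary row in the same fashion. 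At each hop one checks --- using the non-agreement observation above together with simple bookkeeping of which cell currently carries the symbol $t$ after the previous hops --- that the cell being vacated is never an agreement cell, so the hypotheses of Lemma~\ref{lem2} are met; and since a composite of proper trades is again a proper trade fixing every cell previously agreeing with $P$, the resulting trade has all three required properties. Never more than four applications of Lemma~\ref{lem2} are used, which (each disturbing at most $16$ cells, plus a few more for bookkeeping around $(r_1,c_1)$) gives the bound of $69$ other cells.

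Finally, the existence of the needed good choices, and the admissibility of all the Lemma~\ref{lem2} invocations, follow from the hypothesis $20 \le n - 12n\sqrt{k} - 12\epsilon n$: with $d=\sqrt{k}$ and the two forbidden symbols, the running inequalities of Lemma~\ref{lem2} read $3 \le n - 10\sqrt{k}\,n - 6\epsilon n - 6$ and $12 \le n - 12\sqrt{k}\,n - 12\epsilon n - 8$, both implied by $20 \le n - 12n\sqrt{k} - 12\epsilon n$, and the same bound leaves a strictly positive count of admissible auxiliary columns, rows, and symbols at each step. I expect the main obstacle to be precisely the middle step: organizing the case analysis so that the two forced cells $(r_1,c_2)$ and $(r_2,c_1)$ --- over which we have no control and which may lie in overloaded rows or columns or carry overloaded symbols --- are always reachable by a short chain of good swaps, and verifying at each link that no cell on which $P$ and $L$ agree is ever disturbed; this last point is exactly what the partial-latin-square structure of $P$ buys us, since it pins $t = P(r_1,c_1)$ to a single position in its row and column of $P$.
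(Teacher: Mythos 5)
Your setup (the identification of $s,t$, the observation that $(r_1,c_2)$ and $(r_2,c_1)$ cannot be agreement cells because $P$ is a partial latin square, the choice $d=\sqrt{k}$, and the forbidden set $\{s,t\}$) matches the paper, but the core of your argument has a gap that the paper's proof is specifically engineered to avoid. Lemma~\ref{lem2} only guarantees a swap for \emph{most} choices of the two cells: its proof categorically excludes any cell whose column or whose symbol is overloaded. In Lemma~\ref{lem3} the data $s=L(r_1,c_1)$, $t=P(r_1,c_1)$, the column $c_1$, the column $c_2$ containing $t$ in row $r_1$, and the row $r_2$ containing $t$ in column $c_1$ are all \emph{forced} by the problem instance, and any of them may be overloaded. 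Your fallback --- conjugating the swap $(r_1,c_1)\leftrightarrow(r_1,c_2)$ through a good auxiliary column $c_3$, or switching to the column analogue on $c_1$ --- does not repair this: every link of the chain $(r_1,c_1)\leftrightarrow(r_1,c_3)$, $(r_1,c_3)\leftrightarrow(r_1,c_2)$, $(r_1,c_1)\leftrightarrow(r_1,c_3)$ still has $(r_1,c_1)$ or $(r_1,c_2)$ as an endpoint, and if (say) the symbol $s$ is overloaded then no application of Lemma~\ref{lem2}, in either the row or the column form and with any auxiliary line, can ever include the cell $(r_1,c_1)$ among its two swapped cells. So in the worst case your construction simply cannot be carried out.

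The paper's proof resolves exactly this difficulty by never using Lemma~\ref{lem2} to move the cell $(r_1,c_1)$ or the symbols $s_1,s_2$ at all: it invokes Lemma~\ref{lem2} four times (on row $r_1$, column $c_1$, and two auxiliary lines), each time with $\{s_1,s_2\}$ as forbidden symbols and with both swapped cells chosen freely from the ``good'' majority, so as to \emph{assemble} proper $2\times 2$ subsquares on the symbol pairs $(s_2,s_3)$ and $(s_2,s_4)$; performing those proper trades drags $s_2$ into the right row and column, creating a proper $s_1$--$s_2$ subsquare on $(r_1,c_1)$ whose $2\times 2$ trade finishes the job. The decisive point is that a proper $2\times 2$ trade needs no non-overloadedness hypotheses once the subsquare exists, so the forced symbols are only ever moved by such trades, while Lemma~\ref{lem2} is reserved for cells over which one has free choice. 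Your proof needs this idea (or an equivalent one); as written, the routing step fails precisely in the case you yourself flag as the main obstacle.
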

\begin{proof}

Let $d > 0$ be some constant, corresponding to the notion of an ``overloaded'' row we introduced earlier.  

Suppose that $L(r_1, c_1) =s_1 \neq P(r_1, c_1) = s_2$.  Let $r_2$ be a row and $c_2$ be a column such that $L(r_2, c_1) = L(r_1, c_2) = s_2$.  

Our goal in this lemma is to construct a trade that causes $L$ and $P$ to agree at $(r_1, c_1)$, without disturbing any cells at which $P$ and $L$ already agree.  We will do this using four successive applications of Lemma $\ref{lem2}$, one each on row $r_1$, row $r_3$, column $c_1$, and column $c_3$, as illustrated in the picture below:

\begin{center}
\includegraphics[width=5in]{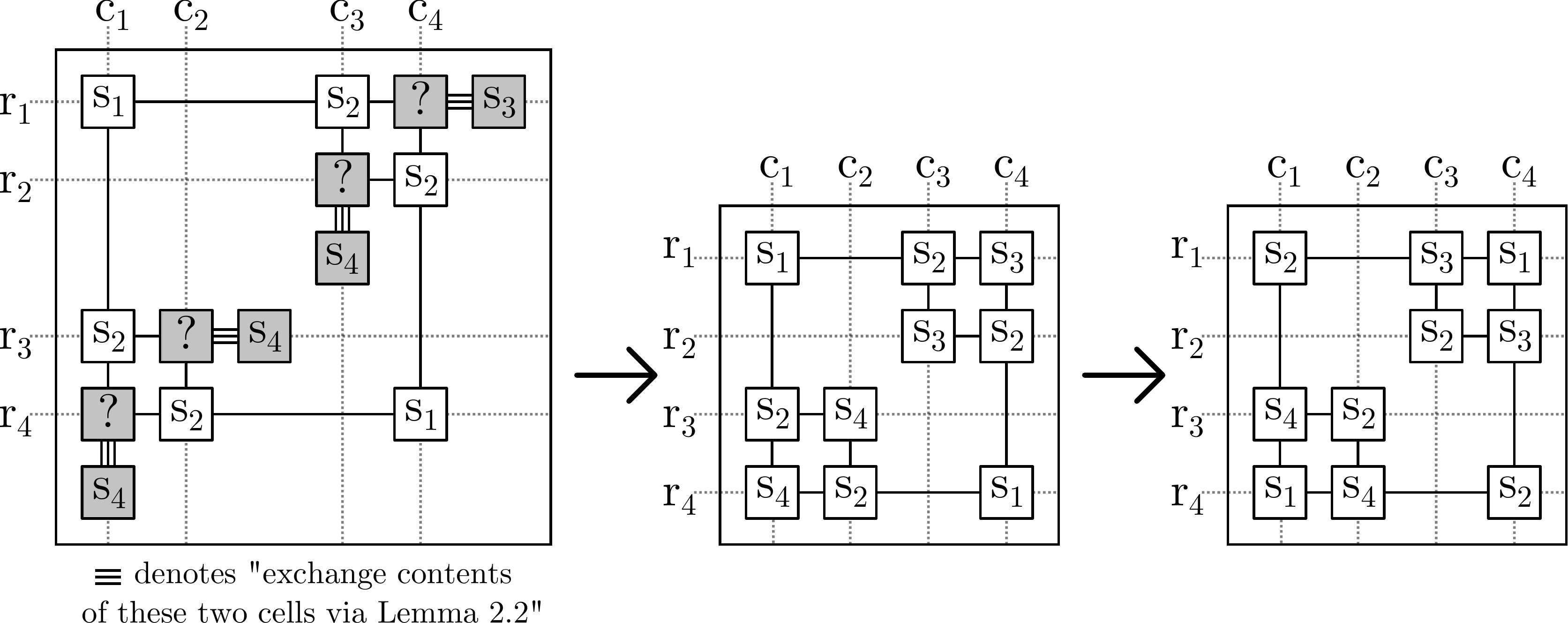}
\end{center}
Assuming we apply Lemma $\ref{lem2}$ as claimed, performing the subsequent trade illustrated in the diagram causes $L$ and $P$ to agree at the cell $(r_1, c_1)$.  Therefore, it suffices to show how we can use Lemma $\ref{lem2}$ as illustrated above. 

 First, we note that in our applications of Lemma $\ref{lem2}$ we will avoid the symbols $\{s_1, s_2\}$, to avoid any conflicts.  Next, notice that picking the cell $(r_4, c_4)$ that contains $s_1$ determines the rows $r_2, r_4$ and the columns $c_2, c_4$. Because of this, we want to choose this cell such that the following properties hold:
\begin{itemize}
\item The cells $(r_4, c_4), (r_4, c_2), (r_2, c_4)$ are not ones at which $P$ and $L$ currently agree.  This eliminates at most $3\epsilon n$ choices
\item The four cells $(r_4, c_1), (r_2, c_3), (r_3, c_2), (r_1, c_4)$ are all valid choices for the first cell to be exchanged in an application of Lemma $\ref{lem2}$. Via Lemma $\ref{lem2}$, this eliminates at most $4\left( 2\frac{k + \frac{48}{n^2}}{d}n + \epsilon n + 2\right)$ choices.  (The $+ \frac{48}{n^2}$ comes from the fact that we are applying Lemma $\ref{lem2}$ four consecutive times, and therefore on the fourth application of our lemma our latin square $L$ may contain up to $kn^2 + 48$ disturbed cells.)  Also, notice that Lemma $\ref{lem2}$ insures that these cells are not ones at which $P$ and $L$ agree.
\end{itemize}
This leaves us with 
\begin{align*}
n - 8\frac{k+ \frac{48}{n^2}}{d} n - 7\epsilon n - 8
\end{align*}
choices for this cell.

Now, choose the symbol $s_3$ such that the following properties hold:
\begin{itemize}
\item The cells containing $s_3$ in row $r_1$ and column $c_3$ are both valid choices for the second cell to be exchanged in an application of Lemma $\ref{lem2}$.  By Lemma $\ref{lem2}$, this eliminates at most $2\left( 4\frac{k+ \frac{48}{n^2}}{d}n + 2d n + 3 \epsilon n + 3\right)$ choices.  Note that in this calculation we have already ensured that these cells are not ones at which $P$ and $L$ agree.
\end{itemize}
This leaves us with 
\begin{align*}
n - 8\frac{k+ \frac{48}{n^2}}{d} n - 4d n - 6\epsilon n - 6
\end{align*}
choices of this symbol. By an identical chain of reasoning, we have precisely the same number of choices for $s_4$.

 Therefore, if we can make the above pair of choices and additionally satisfy the bounds
\begin{align*}
9 &\leq n - 4\frac{k+ \frac{48}{n^2}}{d}n - 6d n  - 6 \epsilon n ,  \\
20 &\leq n - 12d n -  12\epsilon n \\
\end{align*}
required by Lemma $\ref{lem2}$, we can find the requested trades.  Do so one by one, performing one of the $s_3$ trades, then the other, then the corresponding $s_2$-$s_3$ $2 \times 2$ trade created by these two squares, then one of the $s_4$ trades, then the other, and finally the corresponding $s_2$-$s_4$ $2\times 2$ trade.  Because these Lemma $\ref{lem2}$ applications were restricted to not use the symbols $\{s_1, s_2\}$, none of these trades disturb the work done by previous trades, or the $s_1, s_2$ cells in our trade.  Therefore, after performing these trades, we can finally perform the $s_1$-$s_2$ $2 \times 2$ trade created by all of our work, and get the symbol $s_2$ in $(r_1, c_1)$.

Because each application of Lemma $\ref{lem2}$ disturbs the contents of at most 16 cells, and our final trade disturbs 5 other cells apart from $(r_1, c_1)$, we have constructed a trade that disturbs no more than 69 cells other than $(r_1, c_1)$ whenever we can satisfy these five inequalities.

Using basic calculus, it is not too difficult to see that the best choice of $d$ for maximizing the values of $\epsilon, k$ available to us is roughly $\sqrt{k}$.  Therefore, if we let $d = \sqrt{k}$, we can (by comparing our five inequalities) reduce the number of bounds we need to consider down to just one: specifically,
\begin{align*}
20 &\leq n - 12 n\sqrt{k} -  12\epsilon n.\\
\end{align*}
\end{proof}

With these lemmas established, we can now prove the central claim of this paper.
\begin{thm}\label{thm1}
Any $ \epsilon$-dense partial latin square $P$ containing no more than $\delta n^2$ filled cells in total is completable, for $\epsilon < \frac{1}{12}, \delta <\frac{(1 -12\epsilon)^2  }{10409}$.
\end{thm}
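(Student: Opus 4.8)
The plan is to take for $L$ one of the structured latin squares of order $n$ produced by Lemma~\ref{lem1}, and then to run Lemma~\ref{lem3} repeatedly: as long as some filled cell of $P$ is a cell on which $L$ and $P$ disagree, choose such a cell $(r_1,c_1)$ and perform the trade supplied by Lemma~\ref{lem3}, which makes $L$ and $P$ agree at $(r_1,c_1)$, alters at most $69$ other cells of $L$, and \textbf{disturbs no cell on which $L$ and $P$ currently agree}. Once no disagreeing filled cell is left, $L$ is a latin square of order $n$ that agrees with $P$ on every filled cell of $P$ --- that is, a completion of $P$ --- so the whole theorem reduces to checking that this process terminates and never leaves the regime in which Lemma~\ref{lem3} applies.

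Termination follows from the emphasized property above. A cell on which $L$ and $P$ agree is never touched, so a cell fixed by one trade stays fixed, and no cell that agreed before a trade can disagree after it. Hence the set of filled cells of $P$ on which $L$ and $P$ disagree strictly decreases at every step; since it starts with at most $\delta n^2$ members (the total number of filled cells of $P$), the process stops after at most $\delta n^2$ trades.

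It remains to keep the hypothesis of Lemma~\ref{lem3} in force throughout. That hypothesis asks that the set of cells ever disturbed --- including the at most $3n+7$ cells possibly disturbed when $L$ was built --- have size at most $kn^2$, where $k$ is any value for which the bound $20 \le n - 12 n\sqrt{k} - 12\epsilon n$ holds. Each trade changes at most $70$ cells (its target plus at most $69$ others) and there are at most $\delta n^2$ trades, so at most $70\delta n^2 + 3n + 7$ cells are ever disturbed; it therefore suffices to take $k := 70\delta + \frac{3n+7}{n^2}$ and verify $20 \le n - 12 n\sqrt{70\delta + \frac{3n+7}{n^2}} - 12\epsilon n$. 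For $n$ above a threshold $N_0 = N_0(\epsilon,\delta)$, splitting the radical with $\sqrt{a+b}\le\sqrt a+\sqrt b$ shows this is implied by the clean inequality $12\sqrt{70\delta} < 1-12\epsilon$, equivalently $70\cdot 144\,\delta < (1-12\epsilon)^2$; the denominator $10409$ in the statement, slightly larger than $70\cdot 144 = 10080$, is exactly the cushion that absorbs the lower-order terms $\frac{20}{n}$ and $\frac{3n+7}{n^2}$. For $n \le N_0$ --- a range depending only on the fixed constants --- the bound $\delta < \frac{(1-12\epsilon)^2}{10409}$ forces $\delta n^2 \le n-1$, so $P$ has fewer than $n$ filled cells and Smetianuk's theorem \cite{Smetianuk_1981} completes it directly; and since $\frac1\delta > \frac{10409}{(1-12\epsilon)^2} > N_0$, the two ranges overlap and together cover every $n$. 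This proves the theorem.

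Conceptually nothing here is hard --- all of the combinatorial geometry is hidden inside Lemmas~\ref{lem1}--\ref{lem3} --- so the real work is the bookkeeping just described, and the delicate points are twofold. First, the monotonicity step: it is what bounds the number of trades by $\delta n^2$, and without the guarantee that a trade never undoes earlier agreements the count could grow uncontrollably. Second, the mutual calibration of the three constants: the per-trade cost $70$ coming from Lemma~\ref{lem3}, the factor $144$ in its hypothesis (which, because $k$ enters through $\sqrt{k}$, is what turns the bound on $\delta$ into one involving $(1-12\epsilon)^2$ rather than $1-12\epsilon$), and the crossover $N_0$ below which one defers to Smetianuk. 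Making all three fit simultaneously for \emph{every} admissible $n$ is what pins down the precise constant $10409$, and that is the step I would expect to consume the most care.
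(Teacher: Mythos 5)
Your proposal follows essentially the same route as the paper: build $L$ via Lemma~\ref{lem1}, iterate Lemma~\ref{lem3} (using the fact that agreeing cells are never disturbed to bound the number of trades by $\delta n^2$), take $k$ to be the total fraction of disturbed cells, and fall back on Smetianuk's theorem for small $n$. The only difference is bookkeeping at the margin --- you count $70$ disturbed cells per trade where the paper counts $69$ (hence your $10080$ versus the paper's $9936 = 144\cdot 69$, leaving you a slightly thinner cushion inside $10409$ for the lower-order terms) --- but the structure and the case split are identical.
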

\begin{proof}
Take any $ \epsilon$-dense partial latin square $P$ with no more than $\delta n^2$-many filled cells, and let $L$ be a latin square of the same dimension as $P$ as generated by Lemma $\ref{lem1}$.  Cell by cell, select a filled cell $(r_1, c_1)$ of $P$ at which $P(r_1, c_1) \neq L(r_1, c_1)$, and apply Lemma $\ref{lem3}$ to find a trade that disturbs at most 69 other cells and that causes $L$ and $P$ to agree at this cell.  Again, if we can always apply this lemma, iterating this process will convert $L$ into a completion of $P$, and thus prove our theorem.

So: we start with a square in which at most $3n+ 7$ cells were disturbed, and proceed to disturb $69 \delta n^2$ more cells via our repeated applications of Lemma $\ref{lem3}$.  If we want this to be possible, we merely need to choose $n, \delta, \epsilon$ such that the inequality
\begin{align*}
20 &\leq n - 12 \sqrt{69\delta n^2 + 3n + 7} -  12\epsilon n\\
\end{align*}
holds.  For any $\epsilon < \frac{1}{12}$, we can choose any value of
\begin{align*}
 \delta <\frac{(1 -12\epsilon)^2 - \frac{40}{n}(1-12\epsilon) + \frac{400}{n^2} - \frac{432}{n} - \frac{1008}{n^2} }{9936},
\end{align*} 
and our inequality will hold.  For $ \delta < \frac{1}{n}$, our latin square contains $\delta n^2 < n$ symbols, and is therefore completable via a result of Smetianuk.  Otherwise, solve the above inequality for $(1-12\epsilon)^2$ to get
\begin{align*}
  9936 \cdot  \delta  + \frac{40}{n}(1-12\epsilon) + \frac{432}{n} + \frac{608}{n^2}  < (1 -12\epsilon)^2.\\
\end{align*}
Now, if we use our observation that $\delta \geq \frac{1}{n}$, and also the observation that this theorem will only give nontrivial results for values of $n > 10^4$,  we can simplify this to the slightly weaker but much more compact inequality
\begin{align*}
 \delta <\frac{(1 -12\epsilon)^2 }{10409}.
\end{align*} 
\end{proof}
\begin{cor}
Any $\frac{1}{13}$-dense partial latin square containing no more than $5.7 \cdot 10^{-7}$ filled cells is completable.
\end{cor}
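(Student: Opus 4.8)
The plan is to read this off from Theorem \ref{thm1} by setting $\epsilon = \tfrac{1}{13}$ (the number of filled cells is of course meant to be $5.7\cdot 10^{-7}\,n^2$). The one arithmetic point to notice is that $1-12\epsilon = 1-\tfrac{12}{13} = \tfrac{1}{13}$, so $(1-12\epsilon)^2 = \tfrac{1}{169}$. Plugging this into the \emph{compact} bound $\delta < (1-12\epsilon)^2/10409$ of Theorem \ref{thm1} gives only $\delta < \tfrac1{169\cdot 10409}\approx 5.68\cdot 10^{-7}$, which falls a hair short of $5.7\cdot 10^{-7}$. So instead I would use the sharper intermediate inequality that appears in the proof of Theorem \ref{thm1}, namely
\[
\delta < \frac{(1-12\epsilon)^2 - \tfrac{40}{n}(1-12\epsilon) + \tfrac{400}{n^2} - \tfrac{432}{n} - \tfrac{1008}{n^2}}{9936},
\]
whose leading term at $\epsilon=\tfrac1{13}$ is $\tfrac{1}{169\cdot 9936}\approx 5.96\cdot 10^{-7}$, comfortably above the target.

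Next I would split on the size of $n$, exactly as in the proof of Theorem \ref{thm1}. If $P$ has at most $5.7\cdot 10^{-7}\,n^2$ filled cells and $n$ is small enough that $5.7\cdot 10^{-7}\,n^2 < n$ --- i.e.\ $n$ below roughly $1.75\cdot 10^6$ --- then $P$ has fewer than $n$ filled cells and is completable by Smetianuk's theorem \cite{Smetianuk_1981}. Otherwise $n$ is at least about $1.75\cdot 10^6$; in particular $n > 10^4$, so the negative $\tfrac1n$ and $\tfrac1{n^2}$ corrections in the displayed inequality are tiny, and a direct estimate shows its right-hand side still exceeds $5.7\cdot 10^{-7}$. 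The argument of Theorem \ref{thm1} then supplies the completion.

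The work here is entirely bookkeeping rather than mathematics: the corollary is a pure specialization. The one thing that must be checked with care is that the two regimes actually cover all $n$ --- i.e.\ that the window in which the sharpened inequality clears $5.7\cdot 10^{-7}$ begins before the Smetianuk window closes --- and that the strict/weak inequalities line up (Theorem \ref{thm1} demands $\delta$ \emph{strictly} below its bound, whereas ``no more than $5.7\cdot 10^{-7}\,n^2$ cells'' is only a weak bound on the count). Because the leading constant $\approx 5.96\cdot 10^{-7}$ beats $5.7\cdot 10^{-7}$ with room to spare, and the Smetianuk cutoff sits at essentially the same scale of $n$ at which the correction terms have become negligible, confirming the overlap is routine.
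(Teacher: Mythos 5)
Your proposal is correct, and it is essentially the paper's route -- a pure specialization of Theorem \ref{thm1} with $\epsilon = \tfrac{1}{13}$ -- but it is actually more careful than the paper's own one-line proof, which simply declares the corollary ``immediate from Theorem \ref{thm1}.'' As you observed, the compact bound gives only $\delta < \tfrac{1}{169\cdot 10409} \approx 5.68\cdot 10^{-7}$, which falls just short of $5.7\cdot 10^{-7}$, so the corollary is not literally immediate from the theorem's statement; the paper is implicitly rounding or relying on the sharper inequality from the theorem's proof. Your repair -- invoking the intermediate bound with denominator $9936$, whose leading term $\tfrac{1}{169\cdot 9936}\approx 5.96\cdot 10^{-7}$ clears the target, together with Smetianuk's theorem when $5.7\cdot 10^{-7} n^2 < n$ -- is exactly the device the paper itself deploys for its second and third corollaries, and your overlap check is sound: the sharpened inequality holds once $n \gtrsim 1.72\cdot 10^6$ (the dominant correction being $\tfrac{432}{n}$ against a slack of about $2.5\cdot 10^{-4}$ in the numerator), while Smetianuk covers all $n < \tfrac{1}{5.7\cdot 10^{-7}} \approx 1.75\cdot 10^6$, so the two regimes cover every $n$ with room to spare. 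In short, your argument is valid and, if anything, supplies the bookkeeping the paper omits.
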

\begin{cor}
All $\kern-2pt 9.8 \cdot 10^{-5}$-dense partial latin squares are completable.
\end{cor}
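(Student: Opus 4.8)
The plan is to read off the corollary from Theorem \ref{thm1} as a numerical specialization; I describe the argument for ``all $9.8\cdot10^{-5}$-dense partial latin squares are completable'', the $\frac{1}{13}$-dense corollary being entirely parallel. The one wrinkle is that the \emph{compact} bound $\delta < (1-12\epsilon)^2/10409$ stated in Theorem \ref{thm1} is just barely too weak here: with $\epsilon = 9.8\cdot10^{-5}$ it only licenses $\delta$ up to roughly $9.58\cdot10^{-5}$. So instead I would invoke the sharper inequality
\[
20 \;\le\; n - 12\sqrt{69\delta n^2 + 3n + 7} - 12\epsilon n
\]
proved inside Theorem \ref{thm1}, together with the elementary observation that an $\epsilon$-dense partial latin square of order $n$ has at most $\epsilon n^2$ filled cells (each of its $n$ rows contributes at most $\epsilon n$). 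Taking $\epsilon = \delta = 9.8\cdot10^{-5}$, every $9.8\cdot10^{-5}$-dense partial latin square of order $n$ is then completable by Theorem \ref{thm1} as soon as this inequality holds for that value of $n$.

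So the first thing I would do is a short calculus check that the displayed inequality holds for all large $n$ at $\epsilon = \delta = 9.8\cdot10^{-5}$: dividing by $n$ it is asymptotically $0 \le 1 - 12\sqrt{69\epsilon} - 12\epsilon$, and numerically $12\sqrt{69\epsilon}\approx 0.987$, $12\epsilon\approx 0.0012$, leaving a positive margin of about $0.012$; one then checks that the difference of the two sides is eventually increasing in $n$ and pins down an explicit threshold, a computation showing that $n \ge 2\cdot10^{4}$ suffices.

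It remains to handle the orders $n < 2\cdot10^{4}$, and here I would use the density hypothesis itself. For every such $n$ we have $\epsilon n < 2$, so each row, column, and symbol of $P$ contains at most one filled cell; in particular $P$ has at most $n$ filled cells. If it has at most $n-1$ of them, $P$ is completable by Smetianuk's theorem. If it has exactly $n$, then it has exactly one filled cell in each row, each column, and each symbol, so after relabeling rows, columns, and symbols $P$ becomes the square with symbol $i$ in cell $(i,i)$; this sits inside an idempotent latin square of order $n$, which exists for every $n\neq 2$ (and $n=2$ cannot occur, since then $\epsilon n < 1$ and $P$ is empty). Together with the previous step, this covers every order $n$ and completes the proof.

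All of this is bookkeeping; the only thing I would be careful about is that the explicit large-$n$ threshold coming out of the calculus step genuinely falls below the value $2/\epsilon \approx 2\cdot10^{4}$ at which the ``at most one filled cell per line'' argument takes over, so that no order $n$ is left uncovered. There is no conceptual obstacle here — the substance of the paper lives entirely in Lemmas \ref{lem1}--\ref{lem3} and Theorem \ref{thm1}.
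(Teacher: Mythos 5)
Your proposal is correct and follows essentially the same route as the paper: set $\delta=\epsilon$, handle orders with $\epsilon n<2$ by the at-most-one-cell-per-row/column/symbol observation (Smetianuk, resp.\ completing a prescribed diagonal/transversal), and handle large $n$ via the sharper, unreduced inequality from the proof of Theorem \ref{thm1} rather than the compact bound $\delta<(1-12\epsilon)^2/10409$, which is indeed too weak at $\epsilon=9.8\cdot10^{-5}$. Your explicit numerical check of the threshold is just a more careful rendering of the paper's appeal to ``the first, longer inequality.''
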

\begin{cor}
All $10^{-4}$-dense partial latin squares are completable, for $n > 1.2 \cdot 10^5.$
\end{cor}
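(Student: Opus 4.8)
The plan is to read this off from the inequality that actually drives the proof of Theorem~\ref{thm1}, rather than from its final, deliberately simplified statement. An $\epsilon$-dense partial latin square has at most $\epsilon n$ filled cells in each of its $n$ rows, hence at most $\epsilon n^2$ filled cells overall; so a $10^{-4}$-dense partial latin square $P$ of order $n$ is in particular an $\epsilon$-dense partial latin square with $\epsilon = 10^{-4}$ containing at most $\delta n^2$ filled cells for some $\delta \le 10^{-4}$.

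The first point to flag is that the headline bound $\delta < (1-12\epsilon)^2/10409$ of Theorem~\ref{thm1} is \emph{not} quite enough here: with $\epsilon = 10^{-4}$ it reads $\delta < (0.9988)^2/10409 \approx 9.6\cdot 10^{-5}$, which a $10^{-4}$-dense square with nearly $10^{-4}n^2$ filled cells violates. Instead I would return to the sharper condition established inside that proof, namely that iterating Lemma~\ref{lem3} over all filled cells of $P$ succeeds as soon as
\begin{align*}
20 \;\le\; n - 12\sqrt{69\delta n^2 + 3n + 7} - 12\epsilon n,
\end{align*}
the point being that this bound relaxes as $n\to\infty$ (its limiting form is $\delta < (1-12\epsilon)^2/9936$, and $(0.9988)^2/9936 > 10^{-4}$), so for sufficiently large $n$ it does cover $\delta = 10^{-4}$.

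Then I would substitute $\epsilon = \delta = 10^{-4}$ (legitimate because the right-hand side above is decreasing in $\delta$ and $\delta \le 10^{-4}$) and solve for $n$. Rewriting as $0.9988\,n - 20 \ge 12\sqrt{0.0069\,n^2 + 3n + 7}$ and squaring — the left side is positive for the $n$ in play — leaves a quadratic inequality of the shape $0.004\,n^2 - 471.952\,n - 608 \ge 0$, whose positive root is just under $1.2\cdot 10^5$; hence the inequality holds for every $n > 1.2\cdot 10^5$, and Lemma~\ref{lem3} can then be applied to each filled cell of $P$ in turn, converting $L$ into a completion of $P$. (If $P$ happens to have fewer than $n$ filled cells one may instead invoke Smetianuk's theorem, but for $n$ this large a $10^{-4}$-dense square typically has far more than $n$ filled cells, so this is only a formality.)

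The only genuine obstacle is the bookkeeping observation above — recognizing that Theorem~\ref{thm1}'s stated bound is marginally too weak for $\delta = \epsilon = 10^{-4}$ and that one must instead use the $n$-dependent inequality from its proof. Everything after that is an elementary estimate: tracking the constant $69\cdot 10^{-4}$ under the square root against $12\epsilon n = 1.2\cdot 10^{-3}\,n$, with $1.2\cdot 10^5$ chosen as a round number safely above the true crossover near $1.18\cdot 10^5$.
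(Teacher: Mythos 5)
Your proposal is correct and is essentially the paper's own argument: the paper likewise sets $\epsilon=\delta$ and, noting the simplified bound $\delta<(1-12\epsilon)^2/10409$ does not suffice, invokes the ``first, longer inequality'' from the proof of Theorem~\ref{thm1}, which is exactly the $n$-dependent condition $20 \le n - 12\sqrt{69\delta n^2+3n+7} - 12\epsilon n$ that you solve. Your explicit computation locating the crossover near $1.18\cdot 10^5$ (hence validity for $n>1.2\cdot 10^5$) just makes quantitative what the paper leaves implicit.
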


\begin{proof}
The first corollary is immediate from Theorem $\ref{thm1}$.  For the second and third: if we set $\epsilon = \delta$, we are simply dealing with a $ \epsilon$-dense partial latin square.  Whenever $\delta = \epsilon < \frac{2}{n}$, a $ \epsilon$-dense partial latin square is a latin square with no more than 1 entry in any row, column, and symbol, and is completable (either exactly one entry is used in every row, column, and symbol, in which case this is a transversal and is easily completable; otherwise, we have a latin square with less filled cells than its order, which we know is completable due to a result of Smetianuk.)  Otherwise, if $\delta = \epsilon > \frac{2}{n}$, we can use the first, longer inequality in Theorem $\ref{thm1}$ to see the other two inequalities.
\end{proof}

\section{Acknowledgements}\footnotesize
We thank Richard Wilson for his advice throughout the research process, as well as Peter Dukes and Esther Lamken for several remarkably useful discussions.  As well, thanks go to Andre Arslan, Alan Talmage and Sachi Hashimoto for detecting errors and helping develop part of Lemma $\ref{lem1}$.  Finally, thanks go to the referees for their thoughtful comments and assistance.
\normalsize
\bibliographystyle{plain}	
\bibliography{myrefs}

\begin{thebibliography}{10}

\bibitem{Buchanan_2007}
M.~Buchanan.
\newblock Embedding, existence and completion problems for latin squares.
\newblock {\em Ph.D Thesis, University of Queensland}, 2007.

\bibitem{Chetwynd_Haggkvist_1985}
A.~G. Chetwynd and R.~Haggkvist.
\newblock Completing partial $n \times n$ latin squares where each row, column
  and symbol is used at most $cn$ times.
\newblock {\em Reports, Dept. of Mathematics, University of Stockholm}, 1985.

\bibitem{Daykin_Haggkvist_1984}
D.~E. Daykin and R.~H\"aggkvist.
\newblock Completion of sparse partial latin squares, in: {G}raph {T}heory and
  {C}ombinatorics: {P}roceedings of the {C}ambridge {C}onference in {H}onor of
  {P}aul {E}rdos.
\newblock {\em Academic Press, London}, pages 127--132, 1984.

\bibitem{Gustavsson_1991}
T.~Gustavsson.
\newblock Decompositions of large graphs and digraphs with high minimum degree.
\newblock {\em Ph.D Thesis, University of Stockholm}, 1991.

\bibitem{Hall_1945}
M.~Hall.
\newblock An existence theorem for latin squares.
\newblock {\em Bull.\ Amer.\ Math.\ Soc.}, 51(6):387--388, 1945.
\newblock Part 1.

\bibitem{Jacobson_Matthews_1996}
M.~T. Jacobson and P.~Matthews.
\newblock Generating uniformly distributed random latin squares.
\newblock {\em Journal of Combinatorial Designs}, 1985.

\bibitem{Nash_Williams_1970}
C.~St. J.~A. Nash-Williams.
\newblock An unsolved problem concerning decomposition of graphs into
  triangles.
\newblock {\em {C}ombinatorial {T}heory and its applications {III} ed.}, pages
  1179--1183, 1970.

\bibitem{Ryser_1951}
H.~J. Ryser.
\newblock A combinatorial theorem with an application to latin rectangles.
\newblock {\em Proc.\ Amer.\ Math.\ Soc.}, 2(4):550--552, 1951.

\bibitem{Smetianuk_1981}
B.~Smetianuk.
\newblock A new construction on latin squares {I}. {A} proof of the {E}vans
  conjecture.
\newblock {\em Ars Combinatorica}, 11:155--172, 1981.

\bibitem{Wanless_2002}
I.~M. Wanless.
\newblock A generalisation of transversals for latin squares.
\newblock {\em The Electronic Journal of Combinatorics}, 2, 2002.

\end{thebibliography}
\end{document}